\date{\today}
\title[$t$-Cores for $(\Delta+t)$-edge-colouring]{$t$-Cores for $(\Delta+t)$-edge-colouring}
\author{Jessica McDonald}
\author{Gregory J.~Puleo}
\thanks{The first author is supported in part by NSF grant DMS-1600551}
\address{Department of Mathematics and Statistics, Auburn University, Auburn, Alabama, USA 36849}
\email[Jessica McDonald]{mcdonald@auburn.edu}
\email[Gregory J. Puleo]{gjp0007@auburn.edu}
\newcommand{\greg}[1]{\textcolor{green!50!black}{#1}}
\renewcommand{\phi}{\varphi}
\tikzstyle{vertex}=[inner sep = 0pt, minimum width=4pt, fill=black, shape=circle]
\tikzstyle{squarevert}=[inner sep = 0pt, minimum width=4pt, minimum height=4pt, fill=white, shape=rectangle, draw=black, thick]
\newcommand{\gpoint}[2]{\node[style=vertex, label=#1:$#2$]}
\newcommand{\bpoint}[1]{\gpoint{below}{#1}}
\newcommand{\apoint}[1]{\gpoint{above}{#1}}
\newcommand{\rpoint}[1]{\gpoint{right}{#1}}
\renewcommand{\subset}{\subseteq}
\newcommand{\caze}[2]{\textbf{Case {#1}:} \textit{#2}}
\newcommand{\stage}[1]{\textbf{Stage {#1}.}}
\newcommand{\sizeof}[1]{\left\lvert{#1}\right\rvert}
\newcommand{\st}{\colon\,}
\newtheorem{proposition}{Proposition}[section]
\newtheorem{theorem}[proposition]{Theorem}
\newtheorem{lemma}[proposition]{Lemma}
\theoremstyle{definition}
\theoremstyle{remark}
\newtheorem{remark}[proposition]{Remark}
\DeclareMathOperator{\cdeg}{cdeg}
\DeclareMathOperator{\Fan}{Fan}
\DeclareMathOperator{\corefan}{cfan}
\DeclareMathOperator{\fan}{fan}
\begin{document}
\maketitle
\begin{abstract}
  We extend the edge-colouring notion of \emph{core} (subgraph induced
  by the vertices of maximum degree) to \emph{$t$-core} (subgraph
  induced by the vertices $v$ with $d(v)+\mu(v)> \Delta+t$), and find
  a sufficient condition for $(\Delta+t)$-edge-colouring. In
  particular, we show that for any $t\geq 0$, if the $t$-core of $G$
  has multiplicity at most $t+1$, with its edges of multiplicity $t+1$
  inducing a multiforest, then $\chi'(G) \leq \Delta+t$. This extends
  previous work of Ore, Fournier, and Berge and Fournier. A stronger
  version of our result (which replaces the multiforest condition with
  a vertex-ordering condition) generalizes a theorem of Hoffman and
  Rodger about cores of $\Delta$-edge-colourable simple graphs. In
  fact, our bounds hold not only for chromatic index, but for the
  \emph{fan number} of a graph, a parameter introduced by Scheide and
  Stiebitz as an upper bound on chromatic index.  We are able to give
  an exact characterization of the graphs $H$ such that
  $\Fan(G) \leq \Delta(G)+t$ whenever $G$ has $H$ as its $t$-core.
\end{abstract}
\section{Introduction}\label{sec:introduction}

In this paper a \emph{graph} is permitted to have parallel edges but no loops; we will say \emph{simple graph} when we wish to disallow parallel edges.

A \emph{$k$-edge-colouring} of a graph $G$ is a function that assigns a
colour from $\{1, \ldots, k\}$ to each edge of $G$ so that adjacent
edges receive different colours. The \emph{chromatic index} of $G$,
$\chi'(G)$, is the minimum $k$ such that $G$ is $k$-edge-colourable;
the maximum degree $\Delta(G)$ is an obvious lower bound for
$\chi'(G)$.  When the graph $G$ is understood, we sometimes write $\Delta$ for
$\Delta(G)$.

Numerous authors have found sufficient conditions for
$\Delta$-edge-colouring a simple graph $G$ by studying its \emph{core},
that is, the graph induced by the its vertices of degree $\Delta$.
An early such result is due to Fournier~\cite{fournier77, fournier73}:
\begin{theorem}[Fournier~\cite{fournier77, fournier73}]\label{thm:fournier}
  If $G$ is a simple graph and the core of $G$ is a forest, then
  $\chi'(G) = \Delta(G)$.
\end{theorem}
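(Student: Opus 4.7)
The plan is to argue by contradiction. Suppose the theorem fails, and let $G$ be a counterexample minimising $|E(G)|$. Then $\chi'(G) \geq \Delta+1$, while deleting any edge reduces the chromatic index to at most $\Delta$; that is, $G$ is $(\Delta+1)$-critical. The hypothesis that the core of $G$ is a forest will be used only at the very end to obtain a contradiction with a general structural fact about critical graphs.

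The heart of the argument is a version of Vizing's adjacency lemma for critical graphs: I will show that if $uv \in E(G)$ then $v$ has at least $\Delta - d(u) + 1$ neighbours of degree $\Delta$ other than $u$. To prove this I would fix such an edge $uv$, take a $\Delta$-edge-colouring $\phi$ of $G - uv$, and iteratively build a maximal \emph{Vizing fan} at $u$: a sequence $v = v_1, v_2, \ldots, v_k$ of distinct neighbours of $u$ for which the colour $\phi(u v_{i+1})$ is missing at $v_i$. If $v$ had fewer than $\Delta - d(u) + 1$ other neighbours of degree $\Delta$, then a pigeonhole count would produce a colour simultaneously missing at $u$ and at some fan tip (possibly after a Kempe-chain swap along a two-coloured path starting at $u$), and a standard rotation of the fan would extend $\phi$ to a $\Delta$-edge-colouring of $G$, contradicting criticality.

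Granted the adjacency lemma, the theorem follows immediately. Pick any edge $uv$ of the core; both endpoints have degree $\Delta$, so applying the lemma with $d(u) = \Delta$ shows that $v$ has at least one further neighbour of degree $\Delta$. Hence every vertex of the core has at least two neighbours in the core, so the core has minimum degree at least $2$ and contains a cycle, contradicting the hypothesis that it is a forest.

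The main obstacle is the Vizing-fan and Kempe-chain bookkeeping needed to establish the adjacency lemma: one must keep careful track of which colours are missing at $u$, at $v$, and at each fan tip as the fan is rotated, and argue that the relevant Kempe component cannot simultaneously obstruct every possible rotation. Once that lemma is in hand, the final forest-versus-minimum-degree-$2$ contradiction is essentially a one-line observation.
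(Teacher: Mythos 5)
The paper does not prove Fournier's theorem directly; it cites Fournier and recovers the statement as the $t=0$ case of Theorem~\ref{thm:forestcore}, which in turn rests on the Scheide--Stiebitz fan-number machinery (Theorems~\ref{thm:SS}, \ref{thm:corefan}, and \ref{thm:Bqueuecorefan}). Your proposal instead follows the classical route: pass to a $(\Delta+1)$-critical subgraph, invoke Vizing's Adjacency Lemma, and conclude that the core has minimum degree at least $2$. This is a genuinely different decomposition of the argument---more elementary and self-contained, but tied to $t=0$ and simple graphs, whereas the fan-number approach buys the full multigraph generalization for all $t\geq 0$ at once.

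There is, however, a gap in your final step as written. You ``pick any edge $uv$ of the core'' and apply the adjacency lemma with $d(u)=\Delta$, but you never establish that the core of the critical subgraph $G'$ has an edge at all; if that core were an independent set it would trivially be a forest and no contradiction would follow. The fix is to start from a vertex rather than a core-edge: take any $v$ with $d_{G'}(v)=\Delta$ (which exists because $\Delta(G')=\Delta(G)$---itself worth a line, since otherwise Vizing's theorem would already give $\chi'(G')\leq \Delta(G)$) and take any neighbour $u$, \emph{not} assumed to lie in the core. The adjacency lemma gives $v$ at least $\Delta-d(u)+1$ core-neighbours other than $u$; if $d(u)=\Delta$ that is at least one and $u$ itself contributes another, while if $d(u)<\Delta$ it is already at least two. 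Either way every degree-$\Delta$ vertex has two core-neighbours, so the core is nonempty with minimum degree at least $2$ and contains a cycle. You should also observe, when reducing to a critical subgraph, that the core of $G'$ is a subgraph of the core of $G$ and hence still a forest---routine, but needed because the hypothesis is not preserved by arbitrary edge deletion.
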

This result was strengthened by Hoffman and Rodger~\cite{hoffman-rodger} who showed that if $B$ is the core of a graph $G$, and $B$ permits a specific vertex-ordering called a \emph{full B-queue}, then $G$ is $\Delta$-edge-colourable. We defer a precise definition of \emph{full B-queue} to Section~\ref{sec:bqueue} of this paper, but we state their result now, noting that if $B$ is a forest, then it indeed has a full B-queue.  Hoffman and Rodger \cite{hoffman-rodger} also provided an efficient algorithm for deciding whether or not a graph $B$ has a full $B$-queue; in fact they showed that the greedy algorithm works.

\begin{theorem}[Hoffman--Rodger~\cite{hoffman-rodger}]\label{thm:hoffman-rodger}
  Let $G$ be a simple graph with core $B$. If $B$ has a
  full $B$-queue, then $\chi'(G) = \Delta(G)$.
\end{theorem}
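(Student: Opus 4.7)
The plan is to prove Theorem~\ref{thm:hoffman-rodger} by induction along the full $B$-queue, using Vizing's fan technique at each step. Let $G$ be a simple graph with core $B$ admitting a full $B$-queue $v_1, \ldots, v_k$, and suppose for contradiction that $G$ is a minimum counterexample, so $\chi'(G) > \Delta$ but every proper subgraph of $G$ is $\Delta$-edge-colourable. Then $G$ is $(\Delta+1)$-critical, and the standard local lemmas for $(\Delta+1)$-critical graphs (Vizing's adjacency lemma, existence of missing-colour structure at each vertex) are available.

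I would process the queue in reverse starting at $v_k$, which I expect the definition of a full $B$-queue to certify as behaving like a leaf of the core: specifically, $v_k$ should have some neighbour $u$ for which the edge $e = v_k u$ can be deleted, a $\Delta$-edge-colouring $\phi$ of $G - e$ obtained by minimality, and then $\phi$ extended to $e$. The extension step would use a maximal Vizing fan $(u = x_0, x_1, \ldots, x_s)$ at $v_k$. The standard fan analysis reduces the obstruction to the existence of a Kempe chain between two missing colours passing through $v_k$. The full $B$-queue data at $v_k$ should then supply either a non-core vertex in the fan (freeing a missing colour at $v_k$) or a Kempe swap rooted at an earlier queue vertex that avoids $v_k$, either of which completes the extension and contradicts the choice of $G$.

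The hard part will be matching the combinatorial content of ``full $B$-queue'' to the specific local structure the fan argument requires at $v_k$. Since the definition is deferred in the excerpt, my plan is first to isolate precisely which property of $v_k$ is needed: at minimum, $v_k$ must have a non-core neighbour, or few enough core-neighbours relative to $\Delta$, generalising the leaf of a forest used in Fournier's theorem. I would then verify that the greedy construction of a full $B$-queue mentioned by Hoffman and Rodger produces exactly this structural certificate at each queue vertex; once that is in hand, the fan-and-Kempe argument should proceed uniformly at $v_k, v_{k-1}, \ldots, v_1$, with earlier queue vertices serving as reservoirs of free colours when recolouring must propagate backwards along the queue. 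A secondary difficulty is ensuring that the removal of $e$ and subsequent recolourings do not destroy the full $B$-queue property on the remaining graph; this should follow from the monotonicity built into the greedy certificate, but I would want to verify it explicitly before carrying out the induction.
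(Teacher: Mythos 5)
The paper does not prove Theorem~\ref{thm:hoffman-rodger} directly --- it cites Hoffman and Rodger --- but it does reprove a generalization via the Scheide--Stiebitz fan number framework: Theorem~\ref{thm:qcorefan} shows that a full $B$-queue forces $\corefan(B)\leq 0$, Theorem~\ref{thm:corefan} converts this into $\Fan(G)\leq\Delta$, and Theorem~\ref{thm:SS} then gives $\chi'(G)\leq\Fan(G)\leq\Delta$. Your plan is a genuinely different route: a direct minimum-counterexample induction with Vizing fans and Kempe chains, essentially reconstructing the spirit of the original Hoffman--Rodger argument rather than passing through the fan-number abstraction. That is a reasonable alternative in principle; it trades the paper's modular ``compute $\corefan$, invoke Scheide--Stiebitz'' pipeline for hands-on recolouring, and avoids introducing the $\corefan$ machinery at all.

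However, as written this is a plan rather than a proof, and it contains one concrete error and one substantial unresolved gap. The concrete error is the orientation of the queue: you expect $v_k$, the \emph{last} queue vertex, to behave like a leaf of the core, but the definition forces $\sizeof{S_1}=\sizeof{N(u_1)\cup\{u_1\}}\leq 2$, so it is $u_1$, the \emph{first} queue vertex, that has at most one neighbour in $B$; later vertices $u_i$ may have many core neighbours, with only the weaker guarantee that at most one of them lies outside $S_{i-1}$. Correspondingly, the paper's Theorem~\ref{thm:qcorefan} works with the \emph{smallest} index $i$ for which $K$ has an incident edge, not the largest. The substantial gap is the actual fan analysis: you correctly identify that you must match the $B$-queue data to the local structure the Vizing fan requires at the chosen vertex, but you explicitly leave this matching as a to-do. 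That matching is precisely the non-trivial content (in the paper it is the bookkeeping that bounds the number of full-degree vertices a fan can contain, using the ``at most one new neighbour per step'' property across the whole queue, not a leaf property of the final vertex). Your secondary worry about preserving the full $B$-queue under edge deletion is also somewhat misplaced for a minimum-counterexample argument --- criticality, not a preserved $B$-queue, is what colours $G-e$ --- but since you flagged it yourself as unverified it is not the main issue. Until the queue-to-fan translation is made precise, the proposal does not constitute a proof.
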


Simple graphs can be divided into those of \emph{class I} (having chromatic index $\Delta$) or \emph{class II} (having chromatic index $\Delta+1$), but in general the chromatic index of $G$ can be as high as $\Delta+\mu$, where $\mu=\mu(G)$ is the maximum edge-multiplicity of $G$. This classical bound of Vizing \cite{vizing} also has the following local refinement due to Ore, where $\mu(v)$ denotes the maximum edge multiplicity incident to vertex $v$.
\begin{theorem}[Ore~\cite{ore-fourcolor}]\label{thm:ore}
  For every graph $G$, $\chi'(G) \leq \max_{v \in V(G)}[d(v) + \mu(v)]$.
\end{theorem}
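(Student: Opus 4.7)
The plan is to run an induction on $|E(G)|$ that mimics the classical Vizing fan argument, using the local bound $d(v)+\mu(v)\le k$ at each vertex in place of the global bound $\Delta+\mu$. Set $k=\max_{v\in V(G)}[d(v)+\mu(v)]$; the goal is to produce a proper $k$-edge-colouring of $G$.

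First I would pick any edge $e=xy$ and apply the inductive hypothesis to $G-e$ (deleting an edge cannot raise $\max_v[d(v)+\mu(v)]$) to obtain a proper $k$-edge-colouring $\phi$. Writing $\phibar(v)$ for the set of colours missing at $v$ under $\phi$, we have $|\phibar(x)|\ge k-(d_G(x)-1)\ge \mu(x)+1$, and similarly $|\phibar(y)|\ge \mu(y)+1$. If $\phibar(x)\cap\phibar(y)\ne\emptyset$ we colour $e$ with any common missing colour and are done, so we may assume $\phibar(x)\cap\phibar(y)=\emptyset$.

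I would then build a maximal Vizing fan at $x$ rooted at $e$: a maximal sequence of distinct edges $e_0=e,e_1,\ldots,e_s$ incident to $x$, with other endpoints $y_0=y,y_1,\ldots,y_s$ (possibly repeating, since $G$ may have parallel edges), such that $\phi(e_i)\in\phibar(y_{i-1})$ for each $i\ge 1$. If some $\phibar(y_i)$ meets $\phibar(x)$, picking $\gamma\in\phibar(y_i)\cap\phibar(x)$ and working backwards --- recolouring $e_i$ with $\gamma$, then $e_{i-1}$ with the old colour $\phi(e_i)$, and so on down to $e$ --- produces a valid extension, since each $\phi(e_j)$ is missing at $y_{j-1}$ by construction. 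Otherwise, I would pick $\alpha\in\phibar(x)$ and $\beta\in\phibar(y_s)$ and flip the $(\alpha,\beta)$-Kempe chain through $y_s$, intending to reduce to the shift situation above or to free $\alpha$ for $e$ after a final rotation.

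The main obstacle is twofold. First, because $G$ is a multigraph, rotating colours along the fan may threaten to cause a colour repetition at some endpoint $y_i$ that coincides with an earlier $y_j$; the slack $|\phibar(x)|\ge \mu(x)+1$, which exceeds the size of any parallel class at $x$, is exactly what is needed to ensure the fan is long enough for the rotation to resolve such coincidences. Second, we must verify that the Kempe chain from $y_s$ does not close back onto the fan in a way that invalidates the subsequent rotation, and this is where the local slack $|\phibar(y_i)|\ge\mu(y_i)+1$ at each fan endpoint, together with the maximality of the fan, is used. These two verifications are the heart of Ore's local refinement: the proof of Vizing's global theorem goes through verbatim once one observes that the bound $d(v)+\mu(v)\le k$ provides the necessary slack at each vertex individually rather than only globally.
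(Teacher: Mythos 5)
Your proposal takes a genuinely different route from the paper. The paper does not prove Ore's theorem directly: it observes that the result falls out of Vizing's Fan Inequality (Theorem~\ref{thm:vizfan}) in two lines, since with $k = \max_{v}[d(v)+\mu(v)]$ every term of the sum $\sum_{z \in Z}(d_J(z)+\mu_J(x,z)-k)$ is nonpositive (because $\mu_J(x,z) \leq \mu(z)$), so the required inequality $\geq 2$ cannot hold for any edge-minimal counterexample. This citation-based route is deliberate: the whole theme of the paper is that the fan-recolouring machinery should be packaged as a black box via $\Fan$ and $\corefan$. You instead set out to reprove the relevant recolouring argument from scratch.

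Your outline follows the classical Vizing fan argument and correctly identifies the key bookkeeping fact, namely the local slack $|\phibar(v)| \geq \mu(v)$ (or $\mu(v)+1$ at the two endpoints of $e$), but it stops exactly where the work begins. The two ``obstacles'' you flag at the end are not verifications that routinely succeed once the slack is noted; they are the substance of the proof. Concretely: when the multigraph fan revisits an endpoint ($y_i = y_j$ with $i < j$), the naive rotation ``recolour $e_i$ with $\phi(e_{i+1})$, then $e_{i-1}$ with $\phi(e_i)$, \ldots'' can assign the same colour to two of the parallel edges $e_i, e_j$ at the shared vertex, and ruling this out requires a more careful definition of the fan (e.g., demanding that $\phi(e_i)$ be missing at the \emph{first} occurrence of its predecessor in the sequence, or truncating the fan at a repeated endpoint), not a slack count alone. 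Likewise the Kempe-chain case analysis for multigraphs does not read off ``verbatim'' from the simple-graph argument; and your appeal to the proof of Vizing's global bound glosses over the fact that that proof enjoys slack $\mu \geq \mu(v)$ at every vertex, whereas here you have only $\mu(v)$, so one must actually check at each step that the smaller local slack suffices. The approach is sound and the statement is of course true, but the proof as written has a gap precisely in the multigraph fan/Kempe analysis, which is the part that makes Ore's refinement nontrivial.
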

We define the \emph{t-core} of $G$ to be the subgraph induced by the vertices $v$ with \[d(v)+\mu(v)> \Delta+t.\] Observe that the $0$-core of a nonempty
simple graph is simply its core. Ore's Theorem can be restated as: ``For any $t\geq 0$, if  the $t$-core of a graph $G$ is empty, then $G$ is   $(\Delta+t)$-edge-colourable''. We improve this and generalize
Theorem~\ref{thm:fournier} as follows. Here, by \emph{multiforest},
we mean a graph whose underlying simple graph is a forest.
\begin{theorem}\label{thm:forestcore}
  Let $G$ be a graph and let $t\geq 0$. If the $t$-core of $G$ has multiplicity at most $t+1$, with its edges of multiplicity $t+1$ inducing a multiforest, then $\chi'(G) \leq \Delta+t$.
\end{theorem}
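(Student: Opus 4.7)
The natural approach is a Vizing-style argument via multifans and Kempe chains, proceeding by induction on $|E(G)|$. Suppose for contradiction that $G$ is a minimum counterexample: the $t$-core $H$ of $G$ has multiplicity at most $t+1$ with its multiplicity-$(t+1)$ edges inducing a multiforest $M$, yet $\chi'(G) \geq \Delta + t + 1$. I would pick any edge $e_0 = xy_0$; by minimality, $G - e_0$ admits a $(\Delta+t)$-edge-coloring $\phi$. Writing $\phibar(v)$ for the set of colors missing at $v$, we have $|\phibar(v)| \geq (\Delta+t) - d_{G-e_0}(v)$, and the coloring extends to $e_0$ precisely when $\phibar(x) \cap \phibar(y_0) \neq \emptyset$, which we may assume fails.

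Next, I would build a maximal multifan $\mathcal{F} = (f_0 = e_0, f_1, \ldots, f_k)$ at $x$, where each $f_i = xy_i$ is a distinct edge and, for $i \geq 1$, $\phi(f_i) \in \phibar(y_j)$ for some $j < i$. Standard multifan theory (Vizing--Ore style, as developed in the multigraph setting) then yields that $\phibar(x)$ is disjoint from every $\phibar(y_i)$, and a Kempe-chain swap argument forces $d(y_i) + \mu_G(xy_i) \geq \Delta + t + 1$ for every fan neighbor $y_i$. Hence each $y_i$ lies in the $t$-core $H$, and by hypothesis $\mu_G(xy_i) \leq t+1$, so $x$ is incident to at most $t+1$ parallel edges to each fan neighbor.

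The crux is to derive a cycle in the multiforest $M$. Additional Kempe-chain rotations through the fan should show that whenever some $xy_i$ has multiplicity strictly less than $t+1$, the recoloring frees a color common to $x$ and $y_0$, yielding an extension of $\phi$ and a contradiction. Hence every fan edge $xy_i$ must have multiplicity exactly $t+1$ in $G$, and therefore lies in $M$. Iterating the fan-construction at successive fan vertices (or rotating the role of the uncolored edge through $\mathcal{F}$) then traces out a closed sequence of multiplicity-$(t+1)$ edges within $H$, producing a cycle in $M$ and contradicting the multiforest hypothesis.

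The main obstacle I anticipate is the final closed-walk extraction: multigraph Vizing arguments must carefully track distinct parallel edges, and showing that the successive Kempe rotations produce an honest cycle in $M$ (as opposed to a branching or self-intersecting walk that does not correspond to a cycle in the underlying simple graph) requires a meticulous choice of Kempe chains and a careful accounting of which missing-color sets participate in each rotation. I would expect the final form of this step to be cleanly packaged using the vertex-ordering / full $B$-queue language of Hoffman--Rodger referenced in the introduction, since that framework is exactly designed to enforce the acyclicity required to close the argument.
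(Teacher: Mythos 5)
Your proposal is a direct Vizing-style fan argument; the paper takes a very different, modular route. It introduces the cfan number $\corefan(H)$, shows that $\corefan(H)\le t$ implies $\Fan(G)\le\Delta+t$ (Theorem~\ref{thm:corefan}), invokes Scheide--Stiebitz's $\chi'(G)\le\Fan(G)$ (Theorem~\ref{thm:SS}) as a black box, and then certifies $\corefan(H)\le t$ from the multiforest hypothesis via Lemmas~\ref{lem:lowmult}, \ref{lem:fullmult}, \ref{lem:tplus1}, \ref{lem:flatten} and Theorem~\ref{thm:qcorefan}, using the fact that every forest admits a full $B$-queue. All Kempe-chain recoloring is encapsulated inside Theorem~\ref{thm:SS}; the paper's own arguments are purely combinatorial bookkeeping about subgraphs of $H$ and never touch an explicit edge-colouring.

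The gap in your sketch is the one you flag, and it is genuine: the fan $\mathcal{F}$ is a star at $x$, so its edges cannot form a cycle, and there is no argument that ``rotating the uncolored edge'' or ``iterating the fan at successive $y_i$'' traces a closed walk in the underlying simple graph rather than a tree or a branching structure. Two smaller issues compound this. First, ``pick any edge $e_0$'' is the wrong starting move; for a forest-type contradiction you need $e_0$ chosen at (roughly) a leaf of the multiforest, which is exactly what the Hoffman--Rodger $B$-queue ordering supplies---without such a choice the fan at $x$ carries no usable structural information. Second, the Vizing Fan Inequality (Theorem~\ref{thm:vizfan}) gives a \emph{sum} condition $\sum_{z\in Z}(d(z)+\mu(x,z)-k)\ge 2$, not the per-vertex bound $d(y_i)+\mu(xy_i)\ge\Delta+t+1$ for every fan neighbour that you assert; the most it directly yields is that $Z$ contains at least two vertices with $d(z)=\Delta$ and $\mu(x,z)=t+1$, which alone cannot produce a cycle. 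Completing your approach would essentially reconstruct the $B$-queue/elimination-ordering argument---so the closing instinct in your last sentence is correct---but as written the central step is missing, and the proposal is a plan rather than a proof.
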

The $t=0$ case of Theorem~\ref{thm:forestcore} implies
Theorem~\ref{thm:fournier} (and is already slightly stronger, since
Theorem~\ref{thm:forestcore} allows $G$ to be a multigraph even though
$t=0$ forces the $0$-core of $G$ to be simple whenever the hypothesis
is met). When $t=\mu(G)-1$, the hypothesis of
Theorem~\ref{thm:forestcore} is just that the edges of multiplicity
$\mu$ in the $t$-core induce a multiforest; this strengthens a
previous result of Berge and Fournier \cite{berge-fournier}, who
showed that if the $(\mu-1)$-core of $G$ is edgeless, then $G$ is
$(\Delta+\mu-1)$-edge-colourable.

The multiplicity condition in Theorem~\ref{thm:forestcore} is sharp,
and this can already be seen with a fat triangle. Consider the
multigraph $G$ obtained from $K_3$ by giving two edges multiplicity
$t+1$ and the remaining edge multiplicity $t+2$. Now
$\Delta(G) = 2t+3$, and the $t$-core of $G$ is simply the $t+2$
parallel edges (since for each of those endpoints, degree plus
multiplicity is $3t+5>\Delta(G)+t$, while for the other vertex this
sum is only $3t+3$). Hence, the $t$-core of $G$ is a multiforest but
with multiplicity $t+2$; this discrepancy from Theorem
\ref{thm:forestcore} is already enough to cause a problem, as of
course this fat triangle has $\chi'(G)=3t+4 > \Delta(G)+t$.

Theorem \ref{thm:forestcore} is in fact a corollary of a stronger result we prove, which generalizes Theorem \ref{thm:hoffman-rodger}. Theorem \ref{thm:hoffman-rodger} is about a condition on the core (0-core) of a simple graph that guarantees $\Delta$-edge-colourability; here we get a condition on the $t$-core of a graph that guarantees $(\Delta+t)$-edge-colourability (with the same condition when $t=0$).

\begin{theorem}\label{thm:Bqueuecore} Let $G$ be a graph, let $t\geq 0$, and let $H$ be the $t$-core of $G$. If $H$ has multiplicity at most $t+1$, and the underlying simple graph $B$ of those maximum multiplicity edges has a full $B$-queue, then $\chi'(G)\leq \Delta(G)+t$.
\end{theorem}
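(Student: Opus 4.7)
The plan is to argue by contradiction: choose $G$ to be a counterexample with $\sizeof{E(G)}$ minimum. Then $G$ is $(\Delta+t+1)$-edge-critical, so for any edge $e = xy$ there is a $(\Delta+t)$-edge-colouring $\phi$ of $G - e$, and the task is to extend $\phi$ to $e$.

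I would start by building a multifan at $x$ centred on $y$ in the Vizing--Ore style: iteratively add neighbours $y_1, y_2, \ldots$ of $x$ whose edges $xy_i$ carry colours missing at earlier fan vertices. Criticality forces this fan to be ``trapped'': no missing colour at any $y_i$ is also missing at $x$, and the relevant Kempe chains close internally. A counting argument of Ore's type then forces each fan vertex $v$ to satisfy $d(v) + \mu(v) > \Delta + t$, placing $v$ in the $t$-core $H$. A refinement, using the hypothesis that $H$ has multiplicity at most $t+1$, forces every maximum-multiplicity edge encountered along the fan to lie in $B$.

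Now I would invoke the full $B$-queue $v_1, \ldots, v_n$: among the fan vertices in $V(B)$, let $v_k$ be the one with the largest queue index. The defining property of the $B$-queue at $v_k$ should supply a colour missing at $v_k$ that behaves well with respect to $v_k$'s back-neighbourhood $\{v_j : j < k\}$ in $B$. Using this colour, I would execute a carefully chosen Kempe swap (or short sequence of swaps) to collapse the fan and extend $\phi$ to $e$, the desired contradiction. The strategy parallels Hoffman--Rodger in the simple graph case, and Theorem~\ref{thm:forestcore} would then fall out as a corollary because a multiforest admits a full $B$-queue by leaf-peeling.

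The main obstacle will be that each edge of $B$ has multiplicity $t+1$, so multiple parallel edges from $v_k$ may reach its back-neighbourhood and a careless Kempe swap could destroy the fan's critical configuration. The crux of the argument is to show that the full $B$-queue condition is precisely strong enough to locate a missing colour at $v_k$ which is also absent from every parallel edge into the back-neighbourhood, leaving the rest of the fan intact. Verifying this invariant while interleaving the queue ordering with fan positions occupied by non-$B$ vertices of $H$ and by edges of multiplicity below $t+1$ is where the bulk of the technical effort will sit.
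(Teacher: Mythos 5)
Your proposal takes a genuinely different route from the paper, and it also has a genuine gap.

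\textbf{Different route.} The paper does not run a fan-recolouring or Kempe-chain argument for this theorem at all. Instead it works entirely inside the Scheide--Stiebitz ``fan number'' framework: all of the Vizing--Ore recolouring is black-boxed by citing $\chi'(G) \le \Fan(G)$ (Theorem~\ref{thm:SS}), and the actual content of the proof is a chain of purely combinatorial lemmas (\ref{lem:lowmult}, \ref{lem:flatten}, \ref{thm:qcorefan}) showing that the hypotheses force $\corefan(H) \le t$, which by Theorem~\ref{thm:corefan} yields $\Fan(G)\le\Delta+t$. So where you propose to re-derive the critical-graph/trapped-fan machinery from scratch and steer the Kempe swaps by the $B$-queue, the paper only ever manipulates the arithmetic quantity $\cdeg_{H,K}(x,y)$. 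The fan-number approach is what lets the authors additionally prove the converse (Theorem~\ref{thm:converse}) and state the stronger bound $\Fan(G)\le\Delta+t$, neither of which falls out of a direct colouring argument.

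\textbf{The gap.} The decisive step of your plan --- using the $B$-queue to ``locate a missing colour at $v_k$ which is also absent from every parallel edge into the back-neighbourhood, leaving the rest of the fan intact'' and then executing ``a carefully chosen Kempe swap (or short sequence of swaps)'' --- is not established; you explicitly defer it as ``where the bulk of the technical effort will sit.'' That is exactly the non-trivial part of a Hoffman--Rodger-style argument, and in the multigraph setting it is delicate: each $B$-edge carries $t+1$ parallel copies, the fan may pass through $t$-core vertices and low-multiplicity edges that are not in $B$, and it is not at all clear that the \emph{largest} queue index is the right fan vertex to attack (the paper's analogous combinatorial step in Theorem~\ref{thm:qcorefan} isolates the vertex $u_i$ with the \emph{smallest} index incident to $K$, and exploits the constraint $\lvert S_i \setminus (S_{i-1}\cup\{u_i\})\rvert \le 1$ to conclude that earlier vertices fail to lie in $Z(K)$). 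Without carrying out that verification, this is a plausible research plan, not a proof.
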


We can actually state Theorem \ref{thm:Bqueuecore} (and hence Theorem \ref{thm:forestcore}) in an even stronger way, by replacing $\chi'(G)$ with the \emph{fan number} $\Fan(G)$.  Scheide and Stiebitz~\cite{SS} introduced $\Fan(G)$ to essentially describe the smallest $k$ for which
Vizing's Fan Inequality (see Section~\ref{sec:fan}) can be used to prove that $G$ is
$k$-edge-colourable, in particular proving the following.

\begin{theorem}[Scheide--Stiebitz~\cite{SS}]\label{thm:SS}
  For any graph $G$, $\chi'(G) \leq  \Fan(G)$.
\end{theorem}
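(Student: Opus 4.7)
The plan is to follow the template of Vizing's original theorem, applying Vizing's Fan Inequality (Section~\ref{sec:fan}) directly. I argue by contradiction: suppose there exists a graph $G$ with $\chi'(G) > \Fan(G)$, and pick such a $G$ that is edge-minimum. Write $k = \Fan(G)$.

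By edge-minimality, for any edge $e = xy$ of $G$ the subgraph $G - e$ admits a proper $k$-edge-colouring $\phi$: indeed, $\Fan$ is monotone under edge deletion, so $\Fan(G-e) \leq k$, and $G-e$ is not a counterexample. The colouring $\phi$ cannot extend to $e$, since otherwise $\chi'(G) \leq k$. Consequently every colour in $\{1,\ldots,k\}$ appears at $x$ or at $y$, and at least one colour missing at $x$ is present at $y$ (and symmetrically). I then build a maximal Vizing fan at $x$ rooted at the uncoloured edge $xy = xy_0$: a sequence $(y_0, y_1, \ldots, y_s)$ of neighbours of $x$ such that for each $i \geq 1$ the edge $xy_i$ is coloured with a colour missing at $y_{i-1}$ under $\phi$.

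Standard fan/Kempe arguments now drive the proof. If any $y_i$ shares a missing colour with $x$, one rotates the colours along the fan prefix $xy_0, \ldots, xy_i$ and extends $\phi$ to $e$, contradicting the counterexample assumption. Likewise, any Kempe chain (in two colours, one missing at $x$ and one missing at some fan vertex) that terminates inside the fan produces an extension by a two-step rotate-and-swap. Hence the unextendable configuration leaves every relevant Kempe chain ``locked'' with $x$ and the tip of the fan --- and this is precisely the structural witness used to \emph{define} $\Fan(G)$ via the fan inequality.

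The conclusion is then immediate: the tuple $(x; y_0, \ldots, y_s; \phi)$ certifies a fan witness at $x$ of value at least $k+1$, so $\Fan(G) \geq k+1$, contradicting $k = \Fan(G)$. The main obstacle will be bookkeeping --- matching the precise form of the obstruction produced by the failed extension against the definition of $\Fan(G)$ from Section~\ref{sec:fan}, and verifying that every local Kempe reduction and fan rotation has been exhausted before reading off the witness. Once the correspondence between ``cannot extend $\phi$'' and ``$\Fan(G) \geq k+1$'' is made explicit, there is nothing further to do.
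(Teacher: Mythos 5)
Your setup (edge-minimal counterexample, $k$-colouring of $G-e$ that fails to extend) is fine, but the final step inverts the quantifiers in the definition of $\Fan$ and does not go through. Recall from Section~\ref{sec:fan} that $\fan(G)$ is a maximum over subgraphs $J$ of a \emph{minimum} over edges $xy\in E(J)$ of $\deg_J(x,y)$, and that $\deg_J(x,y)\leq k$ can hold via \emph{either} condition (i) or condition (ii). What your failed extension produces (granting Vizing's Fan Inequality, Theorem~\ref{thm:vizfan}, which you essentially re-derive rather than invoke) is, for one particular edge $xy$ of $G$, a set $Z\subseteq N(x)$ violating the inequality in condition (ii) at level $k$. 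This does not even show $\deg_G(x,y)>k$, since condition (i) might still hold for that pair; and even if it did, it would not show $\fan(G)>k$, because the minimum over the edges of $G$ (or of whichever subgraph you would nominate) could be attained at a different edge. So the concluding claim ``the tuple certifies $\Fan(G)\geq k+1$'' is unjustified, and the contradiction is never reached. You also never use the hypothesis $k\geq\Delta$ explicitly, which is what licenses the existence of missing colours and the fan/Kempe machinery in the first place.

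The argument the paper intends (and sketches when motivating the definitions) runs in the opposite direction: use $\fan(G)\leq k$ to \emph{select} a good edge, then apply Theorem~\ref{thm:vizfan} to it. Concretely, with $k=\Fan(G)$, take $J\subseteq G$ edge-minimal among non-$k$-edge-colourable subgraphs (or, in your edge-minimal-counterexample framing, $J=G$ itself); since $\fan(G)\leq k$, there is an edge $e=xy\in E(J)$ with $\deg_J(x,y)\leq k$. If condition (i) holds, the $k$-colouring of $J-e$ extends greedily, since $e$ meets at most $d_J(x)+d_J(y)-\mu_J(x,y)-1\leq k-1$ coloured edges. If condition (ii) holds, then Theorem~\ref{thm:vizfan} (applicable because $k\geq\Delta$) says that either $J$ is $k$-edge-colourable or some $Z$ satisfies inequality~\eqref{fanineq}, and the latter contradicts (ii). Either way $J$ is $k$-edge-colourable, the desired contradiction. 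Your proposal is missing exactly this selection step and the case analysis over (i)/(ii); repairing it amounts to replacing your last paragraph with the argument above.
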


We are able to give an exact characterization of the graphs $H$  such that $\Fan(G) \leq \Delta(G)+t$ whenever $G$ has $H$ as its $t$-core. In particular, we will define $\corefan(H)$
for a graph $H$ (which we'll think of as being the $t$-core of
$G$), and prove the following pair of theorems.

\begin{theorem}\label{thm:corefan}
  Let $G$ be a graph, let $t\geq 0$, and let $H$ be
  the $t$-core of $G$. If $\corefan(H) \leq t$, then $\Fan(G)\leq \Delta+t$.
\end{theorem}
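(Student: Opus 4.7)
The plan is to work directly with the Scheide--Stiebitz definition of the fan number $\Fan(G)$, which expresses it as a maximum, taken over vertices $v$ of $G$ and fan-like structures $F$ rooted at $v$, of a quantity controlled by the degrees and multiplicities of the vertices appearing in $F$. To establish $\Fan(G) \leq \Delta(G)+t$ it suffices to verify that this quantity is at most $\Delta(G)+t$ for every such pair $(v,F)$, and my strategy is to split into cases according to whether the root $v$ belongs to $V(H)$.

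If $v \notin V(H)$, then by definition of the $t$-core we have $d_G(v)+\mu_G(v) \leq \Delta(G)+t$, and the usual Vizing-fan estimate (the one underpinning Theorem~\ref{thm:ore}) yields the bound immediately. The substantive case is therefore $v \in V(H)$. Here I would split the fan $F$ into its part $F_H$ consisting of neighbours lying in $V(H)$ and its remainder $F_0 = F \setminus F_H$. For each $u \in F_0$ we again have $d_G(u)+\mu_G(u) \leq \Delta(G)+t$, so its contribution is controlled by the Ore-type bound. Meanwhile $F_H$, together with $v$, forms a fan structure internal to $H$, and the invariant $\corefan(H)$ should be defined precisely so that its contribution is dominated by $\corefan(H) \leq t$, added on top of the $\Delta(G)$-baseline contributed by the non-core part. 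Combining the two estimates gives $\Delta(G)+t$.

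The main obstacle I foresee is not the case split itself but the clean interface between the ambient quantity $\Fan(G)$ and the intrinsic invariant $\corefan(H)$. The definition of $\corefan(H)$ must depend only on $H$, yet be strong enough that any fan of $G$ rooted inside $H$ decomposes so that its ``core part'' is absorbed once the non-core neighbours have each been charged at most $\Delta(G)+t$. The delicate bookkeeping point is that $\Delta(H)$ is typically much smaller than $\Delta(G)$, so $\corefan(H)$ has to be set up to measure the fan excess \emph{relative to} this $\Delta$-shift rather than as an absolute fan number of $H$ in the usual sense; in particular, the edges of $H$ inherit the high multiplicities from $G$ (up to $t+1$ in the applications in Theorem~\ref{thm:Bqueuecore}), and these multiplicities must be what drives $\corefan$. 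Once the definition is calibrated so that the fan-inequality restricted to $H$ gives exactly the additive slack $t$ above a $\Delta(G)$-baseline, the two local bounds combine additively and the theorem follows.
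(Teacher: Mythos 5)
There is a genuine gap, starting with the characterization of $\Fan(G)$. You describe $\Fan(G)$ as ``a maximum, taken over vertices $v$ of $G$ and fan-like structures $F$ rooted at $v$,'' and conclude that it suffices to bound the relevant quantity for every pair $(v,F)$. But the Scheide--Stiebitz definition is a max-\emph{min}: $\fan(G) = \max_{J \subset G}\min_{xy\in E(J)}\deg_J(x,y)$. You cannot hope to bound $\deg_J(x,y)$ for every edge $xy$ of $J$ -- typically some edges will violate the bound. What you must do is, for each subgraph $J$, \emph{choose} an edge $xy \in E(J)$ with $\deg_J(x,y) \leq \Delta(G)+t$, and the entire point of the hypothesis $\corefan(H) \leq t$ (whose definition also contains a $\min$ over edges) is that it supplies such a choice inside $K = J\cap H$. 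Your proposal never confronts this selection step, and with the ``bound-everything'' reading of the definition it would simply fail.

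Your case split on ``$v\in V(H)$ or not'' is also the wrong one. The paper splits on whether $K = J\cap H$ contains an edge. In particular, your claim that the case $v\notin V(H)$ is dispatched immediately by the Ore-type bound is not correct: the terms in the fan-degree sum are $d_J(z)+\mu_J(x,z)-k$ over $z\in N_J(x)$, so even with $x\notin V(H)$ of low degree, the \emph{neighbours} $z$ might lie in $V(H)$ and have degree close to $\Delta(G)$, giving positive contributions. The degree/multiplicity control you need is on the fan vertices $z$, not the root. The paper avoids this by only falling back to the Ore-type argument in the subcase where $K$ has no edges and $x$ is chosen in $V(H)$ when possible -- this forces \emph{all} of $N_J(x)$ to avoid $V(H)$, so every $z$ satisfies $d_G(z)+\mu_G(z)\leq \Delta(G)+t$ and the sum is controlled. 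Your intuition about $\corefan$ measuring excess relative to a $\Delta(G)$-baseline (via the $d_K(z)-d_H(z)$ deficit) is sound, and the interface inequality $d_J(z) - d_K(z) + d_H(z) \leq d_G(z) \leq \Delta(G)$ is indeed what makes the translation work; but without the choose-an-edge structure and the correct case split the argument does not close.
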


\begin{theorem}\label{thm:converse}
  Let $H$ be a graph, and let $t$ be a nonnegative integer. If
  $\corefan(H) > t$, then there exists a graph $G$ with
  $t$-core $H$ such that $\Fan(G)> \Delta(G)+t$.
\end{theorem}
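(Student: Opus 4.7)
The plan is to construct the graph $G$ from $H$ by ``padding'': fix a target maximum degree $\Delta$ and attach auxiliary vertices and multi-edges at each vertex of $H$, so that $V(H)$ becomes exactly the $t$-core of the resulting $G$, while a fan configuration already present in $H$ forces $\Fan(G) > \Delta + t$.

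First, unpack the hypothesis. Since $\corefan$ is designed to characterise exactly the graphs $H$ that obstruct the bound $\Fan \le \Delta + t$ when placed as a $t$-core, its definition will produce a concrete fan-type witness $F$ inside $H$ whose associated value exceeds $t$. Record the pivot vertex $v \in V(H)$ of $F$, its fan neighbours inside $H$, and the local degree-and-multiplicity data that make the witness exceed $t$; this is the object the construction must protect.

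Next, build $G$. Choose $\Delta$ much larger than any parameter appearing in $H$. For each $u \in V(H)$, attach fresh pendant-like vertices joined to $u$ by multi-edge bundles of small individual size, enough in total so that $d_G(u) = \Delta$ and $\mu_G(u) = \mu_H(u)$. Each pad vertex has degree equal to its bundle size, which can be kept well below $t+1$, so its $d + \mu$ stays strictly below $\Delta + t$; meanwhile each $u \in V(H)$ satisfies $d_G(u) + \mu_G(u) = \Delta + \mu_H(u) > \Delta + t$, provided $\mu_H(u) \ge t + 1$ holds for the relevant vertices (which one may arrange by inflating a single multiplicity in $H$ if the $\corefan$ witness does not already guarantee it). This makes the $t$-core of $G$ exactly $H$, without altering the edges of $H$.

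The main obstacle is verifying that $\Fan(G) > \Delta + t$. The key observation should be that the fan configuration $F$ witnessing $\corefan(H) > t$ reappears verbatim in $G$ at the same pivot $v$, with the same fan neighbours and edge multiplicities. Since $v$ now has $d_G(v) = \Delta$, the local arithmetic of the Scheide and Stiebitz fan number pushes the contribution of $F$ in $G$ strictly above $\Delta + t$. The delicate step is the bookkeeping in the padding: the pad sizes must be chosen so that no pad vertex sneaks into the $t$-core, every vertex of $H$ is driven into the $t$-core, and no added edges at $v$ disturb the fan computation there. Once $\corefan$ is formally defined in the body of the paper, this should reduce to a careful but routine verification that the fan value of $F$, as a fan in $G$, is exactly the quantity $\corefan(H)$ calibrated to exceed $t$.
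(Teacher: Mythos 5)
Your high-level plan (pad every vertex of $H$ up to a large common degree $D$, then show a fan-number witness survives) is the right idea, but several of the concrete steps you describe would fail, and the hardest part of the argument is the one you wave away as ``routine bookkeeping.''

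First, the multiplicity arithmetic doesn't close. You want pad bundles ``well below $t+1$'' so pad vertices stay out of the $t$-core, and you want $\mu_G(u) = \mu_H(u)$. But for $u \in V(H)$ to land in the $t$-core of $G$, you need $d_G(u) + \mu_G(u) > \Delta(G) + t$, i.e.\ $\mu_G(u) > t$, and nothing guarantees $\mu_H(u) > t$. Your proposed fix --- ``inflating a single multiplicity in $H$'' --- is not allowed: the theorem demands that the $t$-core of $G$ be \emph{exactly} $H$, not a modified graph. The paper resolves this differently: the pendant bundles themselves have multiplicity $r-1$ or $r$, where $r > \Delta(H) + t$, so every $u \in V(H)$ inherits $\mu_G(u) \geq r-1 > t$ automatically. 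The pad vertices are then kept out of the $t$-core not by keeping bundles small but by carefully calibrating their degrees so that $d + \mu$ equals $D+t$ \emph{exactly} (this is the entire point of Stage~2 of the paper's construction, which pastes in a $k$-regular graph on the pad vertices and surgically removes parallel copies of a planted matching).

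Second, your claim that ``the fan configuration $F$ witnessing $\corefan(H) > t$ reappears verbatim in $G$'' misidentifies which subgraph witnesses $\fan(G) > \Delta + t$. If you took a subgraph $J \subseteq H$ as the witness, then all $J$-degrees would be at most $\Delta(H) \ll D$, and Condition~(i) of the fan-degree definition, $d_J(x) + d_J(y) - \mu_J(x,y) \leq D+t$, would be trivially satisfied, giving a small fan degree. The correct witness is $J = G[K \cup S]$, where $K$ is the $\corefan$ witness and $S$ is the set of all pad vertices: including the pad edges is exactly what makes $d_J(z) = D + d_K(z) - d_H(z)$ for $z \in V(K)$, which is the identity that converts the cfan-degree inequality $\sum_{z}(d_K(z) - d_H(z) + \mu_K(x,z) - t) > 1$ into the fan-degree inequality $\sum_z(d_J(z) + \mu_J(x,z) - (D+t)) > 1$, and it is also what defeats Condition~(i). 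You must then verify the fan-degree bound not just for edges of $K$ but also for every edge of $J$ incident to a pad vertex, which requires knowing the exact degrees of the pad vertices --- again the content of Stage~2. Without these two ingredients the proof does not go through.
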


This pair of results can be thought of as a sort of multigraph analog
to the work of Hoffman~\cite{hoffman}, who found a necessary and
sufficient condition for a simple graph $H$ to be the core of a simple
graph $G$ containing a so-called overfull subgraph of the same maximum
degree. Overfull graphs are known to be class II. The graph $G$
constructed in Theorem~\ref{thm:converse} does not necessarily satisfy
$\chi'(G) > \Delta(G) + t$, as one might hope, but the lower bound on
the fan number suggests that fan-recolouring would not suffice to
$(\Delta+t)$-edge-colour these graphs.

Our paper is organized as follows. We'll define $\Fan$ and $\corefan$
in Section~\ref{sec:fan}, spending time to motivate these definitions
according to Vizing's Adjacency Lemma, and conclude the section with a
proof of Theorem \ref{thm:corefan}. In Section~\ref{sec:bqueue} we'll
give a precise definition of $B$-queue and full $B$-queue, and prove
Theorem \ref{thm:Bqueuecore}. In particular, we'll show that when $H$
is the $t$-core of $G$, and $H$ has all the assumptions of Theorem
\ref{thm:Bqueuecore}, then $\corefan(H)\leq t$, and hence Theorems
\ref{thm:SS} and \ref{thm:corefan} imply that
$\chi'(G)\leq Fan(G)\leq \Delta+t$. Our proof of Theorem
\ref{thm:converse} is the subject of Section~\ref{sec:converse}.

\begin{remark}
  The word ``core'' has several different meanings in graph theory. In
  addition to the usage above, it has a definition in the setting of
  graph homomorphisms. Moreover, the term ``$k$-core'' has also been
  used in a degeneracy context, to refer to the component of $G$ that
  remains after iteratively deleting vertices of degree at most $k$.
\end{remark}

\section{Proof of Theorem \ref{thm:corefan}}\label{sec:fan}

In the introduction we described $\Fan(G)$ as essentially describe the
smallest $k$ for which the following theorem, Vizing's Fan Inequality, can be used to
prove that $G$ is $k$-edge-colourable. Let us now say more about this.
prove that $G$ is $k$-edge-colourable. Let us now say more about this.

\begin{theorem}\label{thm:vizfan}\emph{(Vizing's Fan Inequality
    \cite{vizing}, see also \cite{SSFT})} Let $G$ be a graph, let
  $k\geq \Delta$, and suppose there is a $k$-edge-colouring of $J-e$
  for some $J\subseteq G$ and $e=xy \in E(G)$. Then either $J$ is
  $k$-edge-colourable, or there exists a vertex-set
  $Z\subseteq N_J(x)$ such that $|Z|\geq 2$, $y\in Z$, and
\begin{equation}\label{fanineq}
\sum_{z\in Z} \left( d_J(x) +\mu_J(x, z) - k\right) \geq 2.
\end{equation}
\end{theorem}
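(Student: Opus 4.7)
I would assume $J$ is not $k$-edge-colourable and extract the set $Z$ from the given $k$-edge-colouring $\phi$ of $J - e$. Writing $\phibar(v) \subseteq \{1, \ldots, k\}$ for the set of colours missing at $v$, the plan is to build a maximal \emph{Vizing fan} $F = (y_0, y_1, \ldots, y_p)$ at $x$: a sequence of distinct neighbours of $x$ in $J$ with $y_0 = y$ such that, for each $i \geq 1$, the colour $\phi(xy_i)$ lies in $\phibar(y_j)$ for some $j < i$. Setting $Z = \{y_0, \ldots, y_p\}$, we have $y \in Z$ by construction.

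Two structural facts drive the inequality. First, a \emph{rotation argument} rules out any colour simultaneously missing at $x$ and at some $z \in Z$: if $\alpha \in \phibar(x) \cap \phibar(y_i)$ for some $i$, then shifting colours along $xy_0, xy_1, \ldots, xy_{i-1}$ and assigning $\alpha$ to the newly-vacated edge extends $\phi$ to a $k$-edge-colouring of $J$, contradicting the assumption. Second, a \emph{Kempe-chain argument}: if two distinct fan vertices $y_i, y_j$ both miss a colour $\alpha$, swapping an appropriate $\alpha\beta$-Kempe chain (for $\beta \in \phibar(x)$) produces a new $k$-edge-colouring of $J - e$ in which the rotation argument can be invoked to reach a contradiction. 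Together with the maximality of $F$ --- which forces every colour in $\bigcup_{z \in Z}\phibar(z)$ to appear on some edge from $x$ into $Z$ in $J - e$, as otherwise that colour's edge would extend the fan --- these facts yield
\[
\sum_{z \in Z}\sizeof{\phibar(z)} \;\leq\; \sum_{z \in Z}\mu_J(x, z) \;-\; 1.
\]
Computing each $\sizeof{\phibar(z)}$ from the degrees of $z$ in $J - e$ and rearranging then delivers the claimed bound, while $|Z| \geq 2$ follows because the case $Z = \{y\}$ is immediately handled by the rotation argument applied to $y$.

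The main technical obstacle will be the Kempe-chain step: one needs a careful case analysis of where the $\alpha\beta$-chain enters and leaves the fan to guarantee that the post-swap colouring still admits the rotation argument. Once pairwise disjointness of the $\phibar(z)$ is secured, and once one has picked the fan carefully enough so that the degree-counting at $z$ can be translated into the $d_J(x)$-form advertised in the statement, the remainder of the proof is just bookkeeping in counts of used and missing colours.
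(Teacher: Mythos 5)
The paper does not actually prove Theorem~\ref{thm:vizfan}; it is stated as a known result with citations to \cite{vizing} and \cite{SSFT}, so there is no in-paper proof to compare against. Your sketch follows the standard Vizing-fan argument from the edge-colouring literature (maximal fan, rotation/shifting, pairwise disjointness of the missing-colour sets via $\alpha\beta$-Kempe chains, then a count of the edges from $x$ into the fan), which is exactly how this inequality is derived in \cite{SSFT}, so the approach is the right one.

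Two remarks. First, the paper's statement writes $d_J(x)$ inside the sum, but this is evidently a typographical slip for $d_J(z)$: with $d_J(x)$ the inequality would be trivially satisfiable when $k=\Delta$ and hence useless, and everywhere the paper subsequently invokes the fan inequality (including the definition of fan-degree) it uses $d_J(z)$. Your computation --- expressing $\sizeof{\phibar(z)}$ as $k - d_{J-e}(z)$, which equals $k - d_J(z)$ for $z\neq y$ and $k - d_J(y)+1$ for $z=y$, then combining with $\sum_{z\in Z}\sizeof{\phibar(z)} \leq \sum_{z\in Z}\mu_J(x,z)-1$ --- produces the $d_J(z)$ form, so your proof is aimed at the corrected statement. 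Second, you correctly flag the Kempe-chain step as the technical crux; a fully detailed proof needs to track which end of the $\alpha\beta$-chain to swap (so as not to disturb the missing colour at $x$) and then re-run the rotation on the resulting colouring, but this is exactly the bookkeeping done in the standard treatments and your outline is compatible with it.
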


Vizing's Theorem (and Ore's Theorem) follow immediately from the fan
inequality. To see this, consider an edge-minimal counterexample $G$
(so let $J=G$ in Theorem \ref{thm:vizfan}), and note that setting
$k=\Delta(G)+\mu(G)$ (or $k=\max_{v \in V(G)}[d(v) + \mu(v)]$) makes
inequality (\ref{fanineq}) impossible to satisfy.

In order to apply Theorem \ref{thm:vizfan}, we would certainly need
$k\geq \Delta$. Given this however, if we had a $k$-edge-colouring of
$J-e$ for some $e=xy\in E(J)$ and we knew that for \emph{every}
$Z\subseteq N(x)$ with $y\in Z$ and $\sizeof{Z} \geq 2$,
\[\sum_{z \in Z}(d_J(z) + \mu_J(x,z) - k) \leq 1,\]
then we'd get a proof of $k$-edge-colourability of $J$ via Theorem
\ref{thm:vizfan}. On the other hand, if we knew that
\[d_J(x) + d_J(y) - \mu_J(x,y) \leq k,\] for such an $e=xy$, then we'd
get our $k$-edge-colouring extending to $J$ simply because $e$ sees at
most $k-1$ different edges in $G$. With this in mind, Scheide and
Stiebitz~\cite{SS} defined the \emph{fan-degree}, $\deg_J(x,y)$, of
the pair $x, y\in V(J)$ as the smallest nonnegative integer $k$ such
that either:
\begin{enumerate}[(i)]
\item $d_J(x) + d_J(y) - \mu_J(x,y) \leq k$, or
\item $\sum_{z \in Z}(d_J(z) + \mu_J(x,z) - k) \leq 1$ for all $Z \subset N_J(x)$ with $y \in Z$ and $\sizeof{Z} \geq 2$.
\end{enumerate}
So, we could extend the $k$-edge-colouring of $J-e$ to $J$ provided we
knew that $\deg_J(x,y)\leq k$. Of course, our goal is to
$k$-edge-colour all of $G$, not just some subgraph $J$. However, if
$G$ is not $k$-edge-colourable, then there exists $J\subseteq G$ with
the property that $J-e$ is $k$-edge-colourable for all $e\in E(J)$ but
$J$ is not $k$-edge-colourable. If, for \emph{this} $J$, we knew that
there was a choice of $xy\in E(J)$ with $d_J(x,y)\leq k$, then we'd
know that $J$ is $k$-edge-colourable after all, and hence so is
$G$. If such a choice of $xy$ existed for \emph{every} subgraph $J$ of
$G$ (say with at least one edge), then we would certainly get that $G$
is $k$-edge-colourable. Hence, Scheide and Stiebitz~\cite{SS} defined
the \emph{fan number}, $\fan(G)$, of a graph $G$ by
\[ \fan(G) = \max_{J \subset G, E(J)\neq \emptyset} \min\{\deg_J(x,y) \st xy \in E(J)\}, \]
with $\fan(G)$ defined to be 0 for an edgeless graph $G$. Recalling the requirement that $k\geq \Delta$, they finally defined $\Fan(G)=\max\{\Delta, \fan(G)\}$, and established Theorem \ref{thm:SS}.

Now suppose that the graph $G$ has $t$-core $H$. We would like to be
able to look just at $H$ and determine that $\Fan(G)\leq \Delta+
t$. To this end, we would like to describe a condition on $H$ that
would guarantee that for every $J\subseteq G$, there exists
$xy\in E(J)$ with $\deg_J(x,y)\leq \Delta+t$. We'll forget about (i)
for this purpose, and try to get a condition on $H$ which guarantees
(ii) for such $J, x, y$. If $K=J\cap H$, then we're trying to get a
guarantee for $J$ by only looking at $K$. The good news here is that
if, for example, some vertex $z\in Z$ is in $J$ but not $K$, then $z$
is not in the $t$-core, so in particular,
\[d_J(z) + \mu_J(x,z) - (\Delta(G)+t)\leq 0,\]
that is, the vertex $z$ is insignificant in terms of establishing (ii). There are more details to handle, but we'll see that the following definition is the right condition to require. Note that while we'll think of $H$ as being the $t$-core of a graph $G$, this definition takes as input any graph $H$.

For any graph $H$, subgraph $K \subset H$, and ordered pair of
  vertices $(x,y)$ with $xy \in E(K)$, we define the \emph{cfan
    degree}, denoted $\cdeg_{H,K}(x,y)$, as the smallest nonnegative integer $l$
  such that for all $Z \subset N_K(x)$ with $y \in Z$, we have
  \[ \sum_{z \in Z}(d_K(z) - d_H(z) + \mu_K(x,z) - l) \leq 1. \]
  Note that, in contrast to the fan degree, the cfan degree does
    \emph{not} impose the restriction that $\sizeof{Z} \geq 2$ when
    determining which sets $Z \subset N_K(x)$ must be considered.
  The \emph{cfan number} of $H$, written $\corefan(H)$, is then
  defined by
  \[ \corefan(H) = \max_{K \subset H, E(K)\neq\emptyset}\min\{ \cdeg_{H,K}(x,y) \st xy \in E(K) \}, \]
  with $\corefan(H)$ defined to be 0 for an edgeless graph $H$. With this definition established, we can now prove Theorem \ref{thm:corefan}.

\begin{proof}[Proof of Theorem~\ref{thm:corefan}]
  Suppose that $\corefan(H) \leq t$. We will show that this implies that $\fan(G)\leq \Delta+t$, which in turn implies that $\Fan(G)\leq \Delta+t$, as desired.

  If $G$ is an edgeless graph, then $\fan(G)=0$ by definition, so our result is immediate. Now suppose that $G$ has at least one edge, and let any subgraph $J \subset G$ with $E(J)\neq \emptyset$ be given. We will show that there exists $xy\in E(G)$ with
  $\deg_J(x,y)\leq \Delta(G)+t$; in particular we will show that for all $Z \subset N_J(x)$ with $y \in Z$ and $\sizeof{Z} \geq 2$,
  \[\sum_{z \in Z}(d_J(z) + \mu_J(x,z) - (\Delta(G)+t)) \leq 1.\]

  Let $K = J \cap H$. We consider two cases:
  either $K$ contains an edge, or $K$ contains no edges.

  \caze{1}{$K$ contains an edge.} In this case, since $K\subseteq H$ and we know that $\corefan(H)\leq t$, we know that there exists $xy\in E(K)$ with $\cdeg_{H, K}(x,y)\leq t$, that is, with
  \[\sum_{z \in Z}(d_K(z) - d_H(z) + \mu_K(x,z) - t) \leq 1 \]
  for all $Z \subset N_K(x)$ with $y \in Z$. Note that for any $z\in V(K)$,
  \[d_J(z)-d_K(z)+d_H(z)\leq d_G(z)\leq \Delta(G).\]
  So we get that for all $Z \subset N_K(x)$ with $y \in Z$,
  \[\sum_{z \in Z}(d_J(z) + \mu_K(x,z) - (\Delta(G)+t)) \leq 1\]
  Now observe that if $w \in N_J(x) - V(H)$, then by the definition of
  the $t$-core of $G$, we have $d_G(w) + \mu(w) \leq \Delta(G)+t$. So
  in fact we can say that the above sum holds for all
  $Z\subseteq N_J(z)$ with $y\in Z$ and $\sizeof{Z} \geq 2$, as
  desired. (Note that this is the reason we cannot impose the
    restriction that $\sizeof{Z} \geq 2$ in the definition of cfan
    degree: if we imposed that restriction and had $N_K(x) = \{y\}$ but
    $\sizeof{N_J(x)} \geq 2$, we would have no control over the value
    of $d_J(y) + \mu_K(x,y) - (\Delta(G)+t)$.)

  \caze{2}{$K$ has no edges.} In this case, let $(x,y)$ be any pair such that $xy \in E(J)$,
  taking $x \in V(H)$ if possible. Our choice of $x$ implies that for all $z \in N_J(x)$,
  we have $z \notin V(H)$, hence $d_G(z) + \mu_G(z) \leq \Delta(G)+t$ by the definition of a $t$-core.
  Thus, for every $Z \subset N_J(x)$ with $y \in Z$ and $\sizeof{Z} \geq 2$, we have
  \[ \sum_{z \in Z}(d_J(z) + \mu_J(x,z) - (\Delta(G)+t)) \leq \sum_{z \in
    Z}(d_G(z) + \mu_G(x,z) - (\Delta(G)+t)) \leq 1, \]
  as needed.
\end{proof}

\section{Proof of Theorem \ref{thm:Bqueuecore}}\label{sec:bqueue}

We start this section by providing the definition of a \emph{full B-queue}, which is needed for Theorem \ref{thm:Bqueuecore} (and for Theorem \ref{thm:hoffman-rodger}). Hoffman and Rodger \cite{hoffman-rodger} defined a \emph{$B$-queue} of a simple graph
$B$ to be a sequence of vertices $(u_1, \ldots, u_q)$ and a sequence
of vertex subsets $(S_0, S_1, \ldots, S_q)$ such that:
  \begin{enumerate}[(i)]
  \item $S_0 = \emptyset$, and
  \item For all $i \in [q]$:
    \begin{itemize}
    \item $S_i = N(u_i) \cup \{u_i\} \cup S_{i-1}$,
    \item $1 \leq \sizeof{S_i \setminus S_{i-1}} \leq 2$,
    \item $u_i \notin \{u_1, \ldots, u_{i-1}\}$, and
    \item $\sizeof{S_i \setminus (S_{i-1} \cup \{u_i\})} \leq 1$.
    \end{itemize}
  \end{enumerate}
  If $S_q = V(B)$ then we say the $B$-queue is \emph{full}. We noted
  in the introduction that every simple forest $B$ admits a full
  $B$-queue. To see this, first suppose that a $B$-queue
  $(u_1, \ldots, u_{i-1})$ and $(S_0, \ldots, S_{i-1})$ has already
  been defined for $B$, but the $B$-queue is not full,
  ie. $S_{i-1}\neq V(B)$. If $B-S_{i-1}$ consists only of isolated
  vertices, then they may be chosen in any order as
  $u_{i}, u_{i+1}, \ldots$ so as to get a full $B$-queue. If not, then
  $B-S_{i-1}$ is a forest, so it contains a leaf vertex which can be
  chosen for $u_{i}$. With this choice $|S_{i}-S_{i-1}|=2$ and
  $\sizeof{S_{i} \setminus (S_{i} \cup \{u_i\})}=1$ (since
  $u_{i}\not\in S_{i-1}$ in this case), so $(u_1, \ldots, u_i)$ and
  $(S_0, \ldots, S_i)$ is again a $B$-queue. This process can be
  repeated until the $B$-queue is full.

In addition to forests, there are many other simple graphs $B$ that have full $B$-queues. For example, while a cycle itself does not have a full $B$-queue (there is no valid choice for $u_1$), adding any number of pendant edges to a cycle allows the same procedure described above to yield a full $B$-queue. For another, more complicated example, see \cite{hoffman-rodger}.

In this section, we'll prove the following result.

\begin{theorem}\label{thm:Bqueuecorefan} Let $G$ be a graph, let $t\geq 0$, and let $H$ be the $t$-core of $G$. If $H$ has multiplicity at most $t+1$, and the underlying simple graph $B$ of those maximum multiplicity edges has a full $B$-queue, then $\corefan(H)\leq t$.
\end{theorem}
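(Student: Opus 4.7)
The plan is to prove the equivalent combinatorial statement: for every subgraph $K \subseteq H$ with $E(K) \neq \emptyset$, there exists an edge $xy \in E(K)$ such that
\[\sum_{z \in Z}\bigl(d_K(z) - d_H(z) + \mu_K(x,z) - t\bigr) \leq 1\]
for all $Z \subseteq N_K(x)$ with $y \in Z$. Writing $g(z)$ for the summand, one sees $g(z) \leq 1$ always, with equality iff $\mu_K(x,z) = t+1$ and $d_K(z) = d_H(z)$. Set $V^* := \{v \in V(H) : d_K(v) = d_H(v)\}$. The pointwise inequality $\mu_K \leq \mu_H$ combined with $\sum_w \mu_K(z,w) = \sum_w \mu_H(z,w)$ forces $\mu_K(z,\cdot) \equiv \mu_H(z,\cdot)$ whenever $z \in V^*$, so the set $A^*(x) := \{z \in N_K(x) : g(z) = 1\}$ is exactly $N_B(x) \cap V^*$ (adopting $N_B(x) = \emptyset$ when $x \notin V(B)$). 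Maximizing the sum over $Z \ni y$ gives $g(y) + |A^*(x) \setminus \{y\}|$, which is at most $1$ as long as $|A^*(x)| \leq 1$: if $y \in A^*(x)$ the maximum equals $|A^*(x)|$, and if $y \notin A^*(x)$ then $g(y) \leq 0$ bounds it by $|A^*(x)|$. The task therefore reduces to finding $x \in V(K)$ with $N_K(x) \neq \emptyset$ and $|N_B(x) \cap V^*| \leq 1$.

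I would do this search by cases driven by the full $B$-queue $(u_1, \ldots, u_q)$ and $(S_0, \ldots, S_q)$. If some vertex $x \in V(H) \setminus V(B)$ has a $K$-neighbor, take that $x$; then $A^*(x) = \emptyset$ since $x \notin V(B)$. Otherwise every $K$-edge has both endpoints in $V(B)$, and I subdivide by whether some $u_i$ has $N_K(u_i) \neq \emptyset$. If some $u_i$ does, take $i$ minimal and $x = u_i$. The queue axiom $|S_i \setminus (S_{i-1} \cup \{u_i\})| \leq 1$ yields $|N_B(u_i) \setminus S_{i-1}| \leq 1$, so it remains to show $N_B(u_i) \cap S_{i-1} \cap V^* = \emptyset$. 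If some $z$ lay in that intersection, let $j < i$ be the unique step with $z \in S_j \setminus S_{j-1}$; then $z$ is either $u_j$ itself or the one extra neighbor of $u_j$ in $B$ admitted at step $j$. In either subcase the rigidity $\mu_K(z,\cdot) \equiv \mu_H(z,\cdot)$ coming from $z \in V^*$ produces a $K$-edge at $u_j$: either $d_K(u_j) = d_H(u_j) \geq t+1$ (since $u_j \in V(B)$ has at least one $B$-neighbor), or $\mu_K(u_j, z) = \mu_H(u_j, z) = t+1$. This contradicts minimality of $i$, so $|A^*(u_i)| \leq 1$.

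The remaining case is when no $u_i$ has a $K$-edge, which forces every $K$-edge to lie between two vertices of $V(B) \setminus \{u_1, \ldots, u_q\}$. For any $z \in V(B) \setminus \{u_1, \ldots, u_q\}$ introduced at step $j$, we have $z \in N_B(u_j)$ but $u_j$ has no $K$-edges, so $\mu_K(u_j, z) = 0 < t+1 = \mu_H(u_j, z)$, forcing $d_K(z) < d_H(z)$ and $z \notin V^*$. A parallel (shorter) argument excludes $u_1, \ldots, u_q$ from $V^*$, so $V^* \cap V(B) = \emptyset$ altogether and $A^*(x) = \emptyset$ for every $x$; any edge of $K$ then works.

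The main technical step, I expect, is the reformulation $A^*(x) = N_B(x) \cap V^*$: this converts the multigraph cfan-degree sum into a purely structural constraint on $B$ and on $V^*$, and the $B$-queue axioms are then custom-made for the minimality argument in the middle case. The other two cases reduce to observing that $A^*(x)$ is empty, so once the identification $A^* = N_B \cap V^*$ is in hand the heart of the proof is the minimality argument.
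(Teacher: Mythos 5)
Your proof is correct, and it takes a genuinely different route than the paper's. The paper factors the argument through three reduction lemmas: first dropping the low-multiplicity edges of $H$ (Lemma~\ref{lem:lowmult}), then observing that for constant-multiplicity graphs the cfan condition is a combinatorial inequality about $Z(K) = \{v : d_K(v) = d_H(v)\}$ (Lemma~\ref{lem:tplus1}), then a ``flattening'' step reducing constant multiplicity $t+1$ to the underlying simple graph $B$ (Lemma~\ref{lem:flatten}), and finally running the $B$-queue minimality argument on $B$ alone (Theorem~\ref{thm:qcorefan}). You collapse all of this into a single direct analysis of $H$: the crucial observation that the summand $g(z) = d_K(z) - d_H(z) + \mu_K(x,z) - t$ is at most $1$, with the set of ``tight'' $z$'s being exactly $N_B(x) \cap V^*$ (via the rigidity of degree equality forcing $\mu_K(z,\cdot) \equiv \mu_H(z,\cdot)$), plays the role of all three reduction lemmas at once. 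Your $A^*(x)$ is the paper's $N_H(x)\cap Z(K)$ from Lemma~\ref{lem:tplus1} seen through the lens of $H$ rather than of the flattened $B$, and the case split on the smallest $i$ with $u_i$ carrying a $K$-edge is the same mechanism as the paper's Theorem~\ref{thm:qcorefan}. What your route buys is brevity and a self-contained proof of the $t$-core theorem; what the paper's route buys is reusable intermediate machinery (Lemma~\ref{lem:tplus1} in particular is invoked again in the examples around Lemma~\ref{lem:flatten}, and Lemma~\ref{lem:lowmult} handles the general-$H$ bookkeeping once and for all). One small imprecision worth flagging: the claimed equality $A^*(x) = N_B(x) \cap V^*$ assumes the maximum multiplicity of $H$ is exactly $t+1$; if it is strictly less, only $A^*(x) \subseteq N_B(x) \cap V^*$ holds (and indeed $A^*(x) = \emptyset$), but the argument never needs the reverse inclusion, so the proof is unaffected.
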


Given the conclusion of Theorem \ref{thm:Bqueuecorefan}, Theorems \ref{thm:SS} and \ref{thm:corefan} immediately tell us that
\[ \chi'(G)\leq \Fan(G)\leq \Delta+t, \]
which in particular implies Theorem \ref{thm:Bqueuecore}.

We'll prove Theorem \ref{thm:Bqueuecorefan} by establishing a sequence of lesser results. The first such lemma, which follows, says that when looking for an upper bound on $\corefan(H)$, it suffices to look at a subgraph of $H$ formed by high-multiplicity edges.

\begin{lemma}\label{lem:lowmult}
  Let $H$ be a graph, let $t$ be a nonnegative integer, and let $H_{>t}$ be
  the subgraph of $H$ consisting of the edges with multiplicity greater
  than $t$. The following are equivalent:
  \begin{enumerate}[(i)]
  \item $\corefan(H) \leq t$,
  \item $\corefan(H_{>t}) \leq t$.
  \end{enumerate}
\end{lemma}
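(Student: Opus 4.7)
The plan is to prove the two implications separately, in each case manipulating the defining sum for $\cdeg$ and exploiting the observation that $H$ and $H_{>t}$ differ only by edges of multiplicity at most $t$. The recurring theme is that for any subgraph $K \subseteq H$ and any $z \in N_K(x)$ with $\mu_H(x,z)\leq t$, the summand $d_K(z)-d_H(z)+\mu_K(x,z)-t$ is already non-positive, since $d_K(z)\leq d_H(z)$ and $\mu_K(x,z)\leq \mu_H(x,z)\leq t$. Hence low-multiplicity neighbors cannot push the $\cdeg$-sum past the threshold $1$, which is the slack we get to exploit in both directions.

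For (ii) $\Rightarrow$ (i), fix a nonempty subgraph $K \subseteq H$ and set $K' = K \cap H_{>t}$. If $K'$ has no edges, then every edge of $K$ joins a low-multiplicity pair, and the non-positivity remark immediately gives $\cdeg_{H,K}(x,y)\leq t$ for any $xy \in E(K)$. Otherwise (ii) supplies $xy \in E(K')$ with $\cdeg_{H_{>t},K'}(x,y) \leq t$, and I would show the same $xy$ works for $K$ in $H$. Given $Z \subseteq N_K(x)$ with $y \in Z$, split $Z = Z_1 \cup Z_2$ with $Z_1 = Z \cap N_{K'}(x)$ and $Z_2 = Z \setminus Z_1$. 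The $Z_2$-partial sum is $\leq 0$ by the remark. For $z \in Z_1$ one has $\mu_K(x,z) = \mu_{K'}(x,z)$ (all copies in $K$ of a high-multiplicity pair lie in $H_{>t}$) and $d_K(z) - d_H(z) \leq d_{K'}(z) - d_{H_{>t}}(z)$ (the subtracted low-multiplicity degrees favor the $K'$-side), so the $Z_1$-partial sum is dominated termwise by the $K'$-version and is $\leq 1$ by hypothesis.

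For the reverse direction (i) $\Rightarrow$ (ii), I would take a nonempty $K \subseteq H_{>t}$ and extend it to $K^\ast \subseteq H$ by adding all low-multiplicity edges of $H$ incident to $V(K)$. A short calculation gives $d_{K^\ast}(z) - d_H(z) = d_K(z) - d_{H_{>t}}(z)$ for $z \in V(K)$, and $\mu_{K^\ast}(x,z) = \mu_K(x,z)$ for $z \in N_K(x)$; hence any edge $xy \in E(K) \subseteq E(K^\ast)$ returned by hypothesis (i) applied to $K^\ast$, with $\cdeg_{H,K^\ast}(x,y)\leq t$, transfers directly to $\cdeg_{H_{>t},K}(x,y) \leq t$. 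The main obstacle I expect is ensuring that the hypothesis can be made to return an edge lying in $E(K)$ rather than one of the newly added low-multiplicity edges of $K^\ast$; handling this will likely require iteratively refining $K^\ast$ to peel off the problematic low-multiplicity choices, or a case analysis based on the local structure of $N_{K^\ast}(x)$ that rules out minimum-$\cdeg$ edges being forced into $E(K^\ast)\setminus E(K)$.
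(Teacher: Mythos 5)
Your argument for (ii)$\Rightarrow$(i) is correct and is essentially the paper's: pass to $K' = K\cap H_{>t}$, observe that the $Z\setminus N_{K'}(x)$ contribution is nonpositive because those neighbours have $\mu_K(x,z)\leq t$, and push the remaining sum over to the $(H_{>t},K')$-version via $d_K(v)-d_H(v)\leq d_{K'}(v)-d_{H_{>t}}(v)$ and $\mu_K(x,z)=\mu_{K'}(x,z)$ on $E(K')$. (You should also handle $K'$ edgeless, which you do; the paper does this too.) This is the only direction the paper actually applies.

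For (i)$\Rightarrow$(ii), the ``obstacle'' you flag --- that the edge returned by applying (i) to your augmented graph $K^\ast$ may not lie in $E(K)$ --- is not a technicality you can engineer around; the implication (i)$\Rightarrow$(ii) is simply false. Take $t=1$ and let $H$ be the triangle on $\{a,b,c\}$ with every triangle edge of multiplicity $2$, together with a few pendant edges of multiplicity $1$ at each of $a,b,c$ (three pendants per vertex, say). Then $H_{>1}$ is the fat triangle of constant multiplicity $2$. With $K=H_{>1}$ and any $xy\in E(K)$, taking $Z=N_K(x)$ in the $\cdeg_{H_{>1},K}$-sum gives $2(2-l)\leq 1$, forcing $l\geq 2$, so $\corefan(H_{>1})\geq 2>t$. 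On the other hand, $\corefan(H)=0$: any $K\subseteq H$ containing a pendant edge $pv$ yields $\cdeg_{H,K}(p,v)\leq 0$ by looking at $Z=\{v\}$ (the sum is $d_K(v)-d_H(v)+1\leq 1$), and any $K\subseteq H$ with edges only inside the fat triangle has each term $d_K(z)-d_H(z)+\mu_K(x,z)\leq 4-7+2=-1<0$. So (i) holds while (ii) fails. The sign of the change from $d_H$ to $d_{H_{>t}}$ points the wrong way: $d_{H_{>t}}(z)\leq d_H(z)$ makes the cfan-sum larger, so $\cdeg_{H_{>t},K}(x,y)\geq \cdeg_{H,K}(x,y)$, not the reverse.

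For what it is worth, the paper's own proof of (i)$\Rightarrow$(ii) is the one-line observation that every nonempty subgraph of $H_{>t}$ is a nonempty subgraph of $H$; this likewise glosses over the change of reference graph in $\cdeg$, and the example above shows that direction of the stated equivalence does not hold. Since only (ii)$\Rightarrow$(i) is invoked later (in the proof of Theorem~\ref{thm:Bqueuecorefan}), the paper's results are unaffected; but you were right to be suspicious of the (i)$\Rightarrow$(ii) direction, and the correct conclusion is that it should be dropped, not patched.
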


\begin{proof}
  Let $H' = H_{>t}$.
  All nonempty subgraphs of $H'$ are also subgraphs of $H$ which must be considered
  when computing $\corefan(H)$, so (i)$\implies$(ii) is immediate. To show that (ii)$\implies$(i),
  let $K$ be any nonempty subgraph of $H$. We will find a pair $(x,y)$ with $\cdeg_{H,K}(x,y) \leq t$.

  If all edges of $K$ have multiplicity at most $t$, then let $(x,y)$
  be any pair with $xy \in E(K)$.  For any $Z \subset N_K(x)$ with
  $y \in Z$, all terms of the sum
  \[ \sum_{z \in Z}(d_K(z) - d_H(z) + \mu_K(x,z) - t) \] are
  nonpositive, so this sum is clearly at most $1$, as desired.

  Thus, we may assume that $K$ has some edges of multiplicity at least
  $t+1$. Let $K' = K \cap E(H')$; now $K'$ is a nonempty subgraph
  of $H'$, so we obtain a pair $(x,y)$ such that
  $\cdeg_{H', K'}(x,y) \leq t$.  We claim that also
  $\cdeg_{H, K}(x,y) \leq t$.

  For any $Z \subset N_K(x)$ with
  $y \in Z$, let $Z' = Z \cap N_{K'}(x)$.  For any $z \in Z-Z'$, we
  have $\mu_K(x,z) \leq t$, so the contribution of $z$ to the sum
  \[ \sum_{z \in Z}[d_K(z) - d_H(z) + \mu_K(x,z) - t] \]
  is nonpositive. Moreover, for every $v \in V(K)$, every edge
    that is lost when we pass from $K$ to $K'$ is also lost when we
    pass from $H$ to $H'$, so that $d_K(v) - d_{K'}(v) \leq d_H(v) - d_{H'}(v)$,
    which rearranges to $d_K(v) - d_{H}(v) \leq d_{K'}(v) - d_{H'}(v)$. Since
    also $\mu_K(u,v) = \mu_{K'}(u,v)$ for all $uv \in E(K')$, this yields
    \begin{align*}
      \sum_{z \in Z}[d_K(z) - d_H(z) + \mu_K(x,z) - t] &\leq \sum_{z \in Z'}[d_K(z) - d_H(z) + \mu_K(x,z) - t] \\
      &\leq \sum_{z \in Z'}[d_{K'}(z) - d_{H'}(z) + \mu_{K'}(x,z) - t] \leq 1,
    \end{align*}
  where the last inequality follows from $\cdeg_{H', K'}(x,y) \leq t$.
\end{proof}

When trying to determine $\corefan$ for a given graph, one need only
focus on \emph{full multiplicity} subgraphs, as indicated in the
following lemma, and we'll see this to be a helpful idea. A subgraph
$K$ of a graph $H$ has \emph{full multiplicity} if
$\mu_{K}(e)=\mu_H(e)$ for all $e\in E(K)$. (Note that some edges of
$H$ may be omitted from $K$ entirely.)

\begin{lemma}\label{lem:fullmult}
    For any graph $H$,
    \[ \corefan(H) = \max_K \min \{ \cdeg_{H,K}(x,y) \st xy \in E(K) \}, \]
    where the maximum is taken over all nonempty subgraphs $K \subset H$ such that $K$ has full
    multiplicity.
  \end{lemma}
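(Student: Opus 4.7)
The plan is to prove the nontrivial direction ($\leq$) by a straightforward \emph{fattening} construction: given any nonempty subgraph $K \subset H$, I will produce a full-multiplicity subgraph $K^* \subset H$ satisfying $\cdeg_{H,K^*}(x,y) \geq \cdeg_{H,K}(x,y)$ for every ordered pair with $xy \in E(K)$. The reverse inequality ($\geq$) is immediate because every nonempty full-multiplicity subgraph is already among the subgraphs considered in the definition of $\corefan(H)$, so restricting the outer maximum to this smaller family can only decrease the value.

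For the construction, let $K^*$ be the subgraph of $H$ whose underlying simple graph agrees with that of $K$, but with each edge $uv$ appearing with full multiplicity $\mu_H(uv)$. Then $K^*$ is a nonempty full-multiplicity subgraph of $H$ with $V(K^*) = V(K)$ and $N_{K^*}(x) = N_K(x)$ for every vertex $x$; in particular, the collection of sets $Z \subset N_K(x)$ that must be considered in the $\cdeg$ definition is the same for $K$ and $K^*$. Moreover, since $K \subset K^*$, we have $d_{K^*}(z) \geq d_K(z)$ and $\mu_{K^*}(x,z) \geq \mu_K(x,z)$ for every $z$.

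The key step is then the termwise inequality
\[ d_K(z) - d_H(z) + \mu_K(x,z) - l \leq d_{K^*}(z) - d_H(z) + \mu_{K^*}(x,z) - l, \]
valid for every $z$ and every integer $l$. Summing over any fixed $Z \subset N_K(x) = N_{K^*}(x)$ with $y \in Z$, this shows that any $l$ witnessing $\cdeg_{H,K^*}(x,y) \leq l$ also witnesses $\cdeg_{H,K}(x,y) \leq l$, whence $\cdeg_{H,K}(x,y) \leq \cdeg_{H,K^*}(x,y)$ for every $xy \in E(K)$ (which, as a set of pairs of vertices, equals $E(K^*)$). Taking minima over such pairs and then the maximum over $K$ delivers the desired inequality.

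The argument is essentially a monotonicity observation, and I do not anticipate a real obstacle; the only point requiring a moment's care is verifying that $N_{K^*}(x) = N_K(x)$, so that the indexing family of sets $Z$ in the $\cdeg$ definition is preserved when passing from $K$ to $K^*$.
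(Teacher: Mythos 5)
Your proposal is correct and follows essentially the same route as the paper: both constructions take the full-multiplicity subgraph $K'$ (your $K^*$) with the same underlying simple graph as $K$, observe that $N_{K'}(x) = N_K(x)$, and use the termwise monotonicity $d_{K}(z) - d_H(z) + \mu_{K}(x,z) - l \leq d_{K'}(z) - d_H(z) + \mu_{K'}(x,z) - l$ to conclude $\cdeg_{H,K}(x,y) \leq \cdeg_{H,K'}(x,y)$ for all $xy \in E(K)$.
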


\begin{proof} Consider a subgraph $K$ of graph $H$, and define $K'$ to be the subgraph of $H$ having the same underlying simple graph as $K$, but having full multiplicity. Note that the truth of the lemma would follow if we could establish
\[ \cdeg_{H,K'}(x,y) \geq \cdeg_{H,K}(x,y) \]
  for all pairs $(x,y)$ with $xy \in E(K)$. To this end, note that the definition of cdeg gives us
    \begin{equation}
      \label{eq:znkprime}
      \sum_{z \in Z'}(d_{K'}(z) - d_H(z) + \mu_{K'}(x,y) - \cdeg_{H,K'}(x,y)) \leq 1
    \end{equation}
    for all $Z' \subset N_{K'}(x)$ with $y \in Z$.
    Since $d_{K'}(z) \geq d_{K}(z)$ and $\mu_{K'}(x,y) \geq \mu_K(x,y)$ for all $x,y$, we see that
    for all such $Z'$, we also have
    \begin{equation}
      \label{eq:znk}
      \sum_{z \in Z'}(d_{K}(z) - d_{H}(z) + \mu_{K}(x,y) - \cdeg_{H,K'}(x,y)) \leq 1.
    \end{equation}
    Since $N_{K'}(x) = N_{K}(x)$ for all $x$, we see that Inequality~\eqref{eq:znk} holds
    (with $Z$ replacing $Z'$) for all $Z \subset N_{K}(x)$ with $y \in Z$. This gives us our desired inequality.
\end{proof}

We now turn our focus to computing $\corefan$ in graphs of
\emph{constant multiplicity}, that is, graphs where every edge has the
same multiplicity.

\begin{lemma}\label{lem:tplus1}
  Let $H$ be a graph of constant multiplicity $t+1$, and for every
  nonempty subgraph $K \subset H$, let $Z(K) = \{v \in V(K) \st d_K(v) = d_H(v)\}$.
  The following are equivalent:
  \begin{enumerate}[(i)]
  \item $\corefan(H) \leq t$,
  \item For every nonempty full-multiplicity subgraph $K \subset H$, there is an edge $xy \in E(K)$
    such that $\sizeof{(N_H(x) \cap Z(K)) - y} \leq d_H(y) - d_K(y)$.
  \end{enumerate}
\end{lemma}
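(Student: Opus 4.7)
The plan is to unpack $\cdeg_{H,K}(x,y) \leq t$ explicitly under the constant-multiplicity hypothesis, identify the worst-case set $Z$ in its defining supremum, and observe that the simplified condition matches (ii) verbatim. By Lemma~\ref{lem:fullmult}, (i) is equivalent to the statement that every nonempty full-multiplicity subgraph $K \subset H$ admits an edge $xy \in E(K)$ with $\cdeg_{H,K}(x,y) \leq t$; since (ii) also restricts to full-multiplicity subgraphs, it suffices to show that for any such $K$ and any $xy \in E(K)$, the inequality $\cdeg_{H,K}(x,y) \leq t$ holds if and only if $\sizeof{(N_H(x) \cap Z(K)) - y} \leq d_H(y) - d_K(y)$.

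First I would substitute the equality $\mu_K(x,z) = t+1$, valid for every $z \in N_K(x)$ because $K$ is full-multiplicity in $H$ and $H$ has constant multiplicity $t+1$, into the defining inequality of $\cdeg$. Writing $a_z := d_H(z) - d_K(z) \geq 0$, the condition $\cdeg_{H,K}(x,y) \leq t$ rewrites as
\[ \sum_{z \in Z}(1 - a_z) \leq 1 \quad\text{for every } Z \subset N_K(x) \text{ with } y \in Z. \]

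Next I would identify the worst-case $Z$. Since $1 - a_z$ is a nonpositive integer when $a_z \geq 1$ and equals $+1$ precisely when $z \in Z(K)$, the supremum of $\sum_{z \in Z}(1 - a_z)$ subject to $y \in Z \subset N_K(x)$ is attained by $Z_{\max} = \{y\} \cup (N_K(x) \cap Z(K))$; its value equals $(1 - a_y) + \sizeof{(N_K(x) \cap Z(K)) \setminus \{y\}}$, and this formula handles the cases $y \in Z(K)$ and $y \notin Z(K)$ uniformly. Rearranging, $\cdeg_{H,K}(x,y) \leq t$ is equivalent to
\[ \sizeof{(N_K(x) \cap Z(K)) \setminus \{y\}} \leq d_H(y) - d_K(y). \]

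To finish, I would verify that $N_K(x) \cap Z(K) = N_H(x) \cap Z(K)$, upgrading the $N_K$-flavoured inequality above to the $N_H$-flavoured condition appearing in (ii). The inclusion $N_K(x) \cap Z(K) \subset N_H(x) \cap Z(K)$ is immediate from $N_K(x) \subset N_H(x)$; for the reverse inclusion, any $w \in N_H(x) \setminus N_K(x)$ has $\mu_K(x,w) = 0$ while $\mu_H(x,w) = t+1$ (because $K$ is full-multiplicity, so it omits $xw$ entirely), which forces $d_H(w) - d_K(w) \geq t+1 > 0$ and hence $w \notin Z(K)$. The only minor obstacle is keeping the $y \in Z(K)$ and $y \notin Z(K)$ branches in sync in the worst-case analysis; fortunately the single unified inequality in (ii) absorbs both branches (with the $y \in Z(K)$ case collapsing to the requirement that $y$ be the sole $Z(K)$-neighbour of $x$), so no additional case split is needed in the writeup.
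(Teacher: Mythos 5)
Your proposal is correct and takes essentially the same approach as the paper: both reduce to full-multiplicity subgraphs via Lemma~\ref{lem:fullmult}, both substitute $\mu_K(x,z) = t+1$, and both identify $\{y\} \cup (N_H(x) \cap Z(K))$ as the extremal set $Z$. The only stylistic difference is that you package the two implications as a single pointwise equivalence for each fixed $(x,y)$, which is a clean way to organize the same computation.
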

\begin{proof}
  (i)$\implies$(ii): Let any nonempty full-multiplicity subgraph $K \subset H$ be given,
  and let $(x,y)$ be a pair such that $xy \in E(K)$ and $\cdeg_{H,K}(x,y) \leq t$. Let $Z = (N_H(x) \cap Z(K)) \cup \{y\}$.
  Observe that $z \in N_H(x) \cap Z(K)$ implies that $xz \in E(K)$, so that $Z \subset N_K(x)$;
  thus, since $\cdeg_{H,K}(x,y) \leq t$, we have
  \[ \sum_{z \in Z}[ d_K(z) - d_H(z) + \mu_K(x,z) - t] \leq 1. \]
  Since vertices in $Z - y$ contribute exactly $1$ to this sum, this implies that
  \[ \sizeof{Z - y} + (d_K(y) - d_H(y) + \mu_K(x,y) - t) \leq 1, \]
  so by the definition of $Z$,
  \[ \sizeof{(N_H(x) \cap Z(k)) -y} \leq d_H(y) - d_K(y) - \mu_K(x,y) + t + 1 = d_H(y) - d_K(y), \]
  where in the last equation we have $\mu_K(x,y) = t+1$ since $K$ is full-multiplicity and $xy \in E(K)$.
  \smallskip

  (ii)$\implies$(i): We apply Lemma~\ref{lem:fullmult}. Let any
  nonempty full-multiplicity subgraph $K \subset H$ be given, and let
  $xy$ be an edge such that
  $\sizeof{(N_H(x) \cap Z(K)) - y} \leq d_H(y) - d_K(y)$.  We claim
  that $\cdeg_{H,K}(x,y) \leq t$.

  Let $Z$ be any subset of $N_K(x)$ containing $y$. We must show that
  $\sum_{z \in Z}[ d_K(z) - d_H(z) + \mu_K(x,z) - t)] \leq 1$. Observe
  that any elements of $Z - Z(K)$ contribute
  a nonpositive term to this sum, while elements of $Z(K)$ contribute $1$, so that
  \begin{align*}
    \sum_{z \in Z}[ d_K(z) - d_H(z) + \mu_K(x,z) - t] &\leq \sum_{z \in (N_H(x)\cap Z(K)) \cup \{y\}}[ d_K(z) - d_H(z) + \mu_K(x,z) - t ] \\
    &\leq \sizeof{(N_H(x) \cap Z(K)) - y} + (d_K(y) - d_H(y) + 1) \\
    &\leq (d_H(y) - d_K(y)) + (d_K(y) - d_H(y) + 1) \\
    &= 1.\qedhere
  \end{align*}
\end{proof}

\begin{lemma}\label{lem:flatten}
  Let $B$ be a simple graph, and let $B_s$ and $B_t$ be graphs of
  constant multiplicity $s+1$ and $t+1$ respectively, with underlying
  simple graph $B$. If $0\leq s < t$ and $\corefan(B_s) \leq s$, then
  $\corefan(B_t) \leq t$.
\end{lemma}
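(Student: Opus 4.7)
The plan is to reduce both the hypothesis and the conclusion to the combinatorial criterion furnished by Lemma~\ref{lem:tplus1}, and to observe that, at constant multiplicity, this criterion is monotone in the common multiplicity.

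First I would note that full-multiplicity subgraphs of $B_t$ are in bijection with simple subgraphs $K^{*} \subset B$ via $K^{*} \mapsto K$, where $K$ replaces each edge of $K^{*}$ with $t+1$ parallel copies; the analogous bijection holds for $B_s$. Under this identification, $d_K(v) = (t+1)d_{K^{*}}(v)$ and $d_{B_t}(v) = (t+1)d_B(v)$, so
\[ Z(K) = \{v \st d_{K^{*}}(v) = d_B(v)\}, \]
a set that depends only on $B$ and $K^{*}$ (not on $t$), and $N_{B_t}(x) = N_B(x)$ as vertex sets. Thus the left-hand side $\sizeof{(N_{B_t}(x) \cap Z(K)) - y}$ of the criterion in Lemma~\ref{lem:tplus1} depends only on $B$ and $K^{*}$, while the right-hand side $d_{B_t}(y) - d_K(y) = (t+1)(d_B(y) - d_{K^{*}}(y))$ scales linearly with the multiplicity.

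Given any nonempty full-multiplicity subgraph $K' \subset B_t$ with underlying simple graph $K^{*}$, I would let $K \subset B_s$ be the corresponding full-multiplicity subgraph. Applying the hypothesis and the (i)$\implies$(ii) direction of Lemma~\ref{lem:tplus1} to $B_s$ produces an edge $xy \in E(K) = E(K')$ with left-hand side at most $(s+1)(d_B(y) - d_{K^{*}}(y))$; since $s < t$ and $d_B(y) \geq d_{K^{*}}(y)$, this is at most $(t+1)(d_B(y) - d_{K^{*}}(y))$, which is exactly the right-hand side of the criterion for $B_t$ and $K'$. Invoking the (ii)$\implies$(i) direction of Lemma~\ref{lem:tplus1} for $B_t$ then gives $\corefan(B_t) \leq t$. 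The key point to recognize is the bijection between full-multiplicity subgraphs of $B_s$ and of $B_t$ together with the monotonicity of the criterion in the multiplicity; there is no serious obstacle once this observation is in hand.
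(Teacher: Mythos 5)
Your proof is correct and follows essentially the same route as the paper: reduce via Lemma~\ref{lem:tplus1} to the degree-deficit criterion, pass between full-multiplicity subgraphs of $B_s$ and $B_t$ through their common underlying simple graph, note that $Z(K)$ and $N(x)$ are multiplicity-invariant while the right-hand side scales by the factor $(\ell+1)$, and apply the two directions of Lemma~\ref{lem:tplus1} in turn. The paper makes the same observation (using $J$ for your $K^{*}$ in its final display); there is no substantive difference.
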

\begin{proof}
  First observe that for every vertex $v \in V(B)$, we have
  $N_B(v) = N_{B_s}(v) = N_{B_t}(v)$; thus, we suppress the
  subscripts and simply write $N(v)$. To avoid double-subscripts, we
  will also write $d_s$ and $\mu_s$ as shorthand for $d_{B_s}$ and
  $\mu_{B_s}$, and likewise for $t$.

  We verify Condition~(ii) of Lemma~\ref{lem:tplus1} for $B_t$. Let
  $K$ be any nonempty full-multiplicity subgraph of $B_t$, and let
  $K'$ be the full-multiplicity subgraph of $B_s$ having the same
  underlying simple graph. Observe that $Z(K') = Z(K)$, and that $K'$
  is nonempty since it has the same underlying simple graph as $K$.

  Applying Lemma~\ref{lem:tplus1} to $B_s$, there is an edge $xy \in E(K')$ such that
  \[
    \sizeof{(N(x) \cap Z(K')) - y} \leq d_{s}(y) - d_{K'}(y).
    \]
    By the definition of $K'$, we have $\mu_K(xy) > 0$, that is, $xy \in E(K)$.
    Since $Z(K') = Z(K)$, it therefore suffices to show that $d_{s}(y) - d_{K'}(y) \leq d_{t}(y) - d_{K}(y)$. This follows from observing that if $J$ is the common underlying simple
    graph of $K$ and $K'$, then
    \[ d_{s}(y) - d_{K'}(y) = (s+1)[d_{B}(y) - d_{J}(y)] \leq (t+1)[d_{B}(y) - d_J(y)] = d_{t}(y) - d_{K}(y). \qedhere \]
\end{proof}
\begin{remark}
  The converse of Lemma~\ref{lem:flatten} is not true: for a simple graph $H$, it is possible that
  $\corefan(H_t) \leq t+1$ yet $\corefan(H) > 0$, where we consider $H$ itself as the graph $B_s$ for $s=0$.
  Consider the simple graph $H$ shown in Figure~\ref{fig:flattenconverse}.
  \begin{figure}
    \centering
    \begin{tikzpicture}
      \rpoint{w} (u1) at (0cm, 1cm) {};
      \apoint{} (u2) at (-1cm, 0cm) {};
      \apoint{} (u3) at (1cm, 0cm) {};
      \rpoint{u} (u4) at (0cm, -1cm) {};
      \rpoint{v} (v) at  (0cm, -2cm) {};
      \draw (u1) -- (u2) -- (u4) -- (u3) -- (u1); \draw (u2) -- (u3); \draw (v) -- (u4);
    \end{tikzpicture}
    \caption{Simple graph $H$ such that $\corefan(H_1) \leq 1$ but $\corefan(H) > 0$.}
    \label{fig:flattenconverse}
  \end{figure}

  To see that $\corefan(H) > 0$, consider the subgraph $K = H-v$. If $\corefan(H)\leq 0$, then there exists $xy\in E(K)$ with
  \[ \sum_{z \in Z}(d_K(z) - d_H(z) +1 - 0) \leq 1 \] for all
  $Z\subseteq N_K(x)$ with $y\in Z$. However, the only vertex in $K$
  that does not have the same degree in $K$ as it does in $H$ is $u$,
  and this difference in degree is only one. Hence $u$ is the only
  vertex that could contribute a nonpositive amount to this sum. If
  $x$ is not $u$ or $w$, then $x$ has degree $3$ in $K$, so the sum
  cannot be at most 1. On the other hand, if $x$ is $u$ or $w$, then
  while $x$ has only degree two in $K$, neither of these neighbours is
  $u$.

  To see that $\corefan(H_1) \leq 1$, let any full-multiplicity
  subgraph $K \subset H_1$ with $K\neq \emptyset$ be given. According
  to Lemma \ref{lem:tplus1} we need only find $xy \in E(K)$ with
  \[\sizeof{(N_{H_1}(x) \cap Z(K)) - y} \leq d_{H_1}(y) - d_K(y),\]
  where $Z(K) = \{v \in V(K) \st d_K(v) = d_H(v)\}$. If $uv \notin E(K)$ but $K$ does have at least one edge incident to $u$, then we may take $y=u$ and $x \in N_K(u)$, so that
  \[ \sizeof{(N_{H_1}(x) \cap Z(K)) - y} \leq 2 \leq d_{H_1}(y) -
    d_{K}(y). \] (Recall that $H_1$ has constant multiplicity $2$,
  so at a minimum, the two copies of $uv$ are missing at $y$ in the
  subgraph $K$.) Otherwise, we may choose $xy$ so that
  $(N_{H_1}(x) \cap Z(K)) - y =\emptyset$ and hence we immediately
  get our desired inequality: if $vu\in E(K)$ then we take
  $(x,y) = (v,u)$, and if $K$ has no edges incident to $u$ then
  $Z(K)$ is either $\emptyset$ or $\{w\}$, and in the latter
  case we may choose $y=w$.
\end{remark}

The following is the final result we need in order to write our proof of Theorem \ref{thm:Bqueuecorefan}.

\begin{theorem}\label{thm:qcorefan}
  Let $B$ be a simple graph. If $B$ has a full $B$-queue, then $\corefan(H) \leq 0$.
\end{theorem}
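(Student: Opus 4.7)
The plan is to apply Lemma~\ref{lem:tplus1} with $t = 0$ to the simple graph $B$: it then suffices to show that for every nonempty subgraph $K \subseteq B$ there is an edge $xy \in E(K)$ with
\[ \sizeof{(N_B(x) \cap Z(K)) - y} \leq d_B(y) - d_K(y), \]
where $Z(K) = \{v \in V(K) \st d_K(v) = d_B(v)\}$. Fix a full $B$-queue $(u_1, \ldots, u_q)$ with associated sets $(S_0, \ldots, S_q)$, and split on whether $V(K)$ meets $\{u_1, \ldots, u_q\}$.

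\textbf{Case 1: some $u_i$ lies in $V(K)$.} Let $i_0$ be the smallest such index. The heart of the proof is the structural claim
\[ Z(K) \cap S_{i_0-1} = \emptyset. \]
To prove it, note that any $v \in V(K) \cap S_{i_0-1}$ enters $S$ at some step $\alpha(v) \leq i_0-1$, and cannot equal $u_{\alpha(v)}$ (since no $u_j$ with $j < i_0$ lies in $V(K)$), so $v$ is the ``other new vertex'' at step $\alpha(v)$ and lies in $N_B(u_{\alpha(v)})$; then $v u_{\alpha(v)} \in E(B)\setminus E(K)$, so $d_K(v) < d_B(v)$ and $v \notin Z(K)$.

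Now take $x = u_{i_0}$ and let $y$ be any $K$-neighbor of $x$. The full $B$-queue conditions give $N_B(u_{i_0}) \subseteq S_{i_0} \setminus \{u_{i_0}\}$ and $S_{i_0} \setminus S_{i_0-1} \subseteq \{u_{i_0}, v_{i_0}\}$ for at most one ``other new vertex'' $v_{i_0}$, so the claim yields $N_B(u_{i_0}) \cap Z(K) \subseteq \{v_{i_0}\} \cap Z(K)$ and the left-hand side of the target inequality is at most $1$. Either $y = v_{i_0}$ (so the left-hand side is $0$), or $y \neq v_{i_0}$, in which case $y \in V(K) \cap S_{i_0-1}$ (the only other possibility in $V(K)\cap N_B(u_{i_0})$), so by the claim $y \notin Z(K)$ and $d_B(y) - d_K(y) \geq 1$ absorbs the left-hand side.

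\textbf{Case 2: no $u_i$ lies in $V(K)$.} The same ``missing edge to $u_{\alpha(v)}$'' argument applied to every $v \in V(K)$ forces $Z(K) = \emptyset$, so any edge $xy \in E(K)$ trivially works.

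The main obstacle is the structural claim in Case 1; once it is in hand, the rest is a short case split on $y$. The full $B$-queue is used precisely to guarantee $N_B(u_{i_0}) \subseteq S_{i_0}$ together with $\sizeof{S_{i_0} \setminus (S_{i_0-1} \cup \{u_{i_0}\})} \leq 1$, which is what confines the potentially ``bad'' contributions to $N_B(x) \cap Z(K)$ to the single vertex $v_{i_0}$.
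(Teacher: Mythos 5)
Your proof follows essentially the same approach as the paper's: apply Lemma~\ref{lem:tplus1} with $t=0$, locate the first $u_i$ reaching $K$, and use the $B$-queue constraints to confine $N_B(x) \cap Z(K)$. The structural claim $Z(K) \cap S_{i_0-1} = \emptyset$ is a clean global reformulation of the paper's local observation (that any $z \in N_K(x) \cap S_{i-1}$ is missing its $B$-edge to some earlier $u_j$, hence lies outside $Z(K)$).

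However, there is a gap in your case split. You take $i_0$ to be the smallest index with $u_{i_0} \in V(K)$, then set $x = u_{i_0}$ and let $y$ be ``any $K$-neighbor of $x$'' --- but if $u_{i_0}$ is an isolated vertex of $K$ (which a subgraph is allowed to have), there is no such $y$ and this step fails. The paper avoids this by indexing $i$ on the smallest $u_i$ incident to an \emph{edge} of $K$, which is well-defined whenever $E(K) \neq \emptyset$. The repair is easy --- either adopt the paper's case split, or observe that isolated vertices of $K$ never appear in $N_B(x)$ for any $x$ lying on a $K$-edge and hence may be deleted from $K$ without affecting the inequality to be verified --- but as written the argument has a hole. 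A smaller stylistic difference: the paper chooses $y$ from $N_K(u_i) \cap (S_i \setminus S_{i-1})$ when possible, which forces $N_B(x) \cap Z(K) \subseteq \{y\}$ directly and avoids your extra sub-case analysis on whether $y = v_{i_0}$; your version still works but trades a careful choice of $y$ for a slightly longer final case split.
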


\begin{proof}
  Consider a full $B$-queue with vertex sequence $(u_1, \ldots, u_q)$ and set sequence $(S_1, \ldots, S_q)$.
  Let $K\subseteq H$ with $E(K)\neq\emptyset$. According to Lemma \ref{lem:tplus1} we need only find $xy \in E(K)$ with
    \[\sizeof{(N_{H}(x) \cap Z(K)) - y} \leq d_B(y) - d_K(y),\]
    where $Z(K) = \{v \in V(K) \st d_K(v) = d_H(v)\}$. We consider two cases:

    \caze{1}{$K$ contains an edge incident to $u_i$ for some $i$.}
    Choose $i$ to be the smallest such index, and let $x = u_i$. If
    there is some vertex in $N_K(u_i) \cap (S_i \setminus S_{i-1})$,
    let $y$ be such a vertex; otherwise, let $y$ be an arbitrary
    element of $N_K(x)$. We claim that
    $N_{H}(x) \cap Z(K) \subset \{y\}$. To this end, consider any
    $z \neq y \in N_K(x)$. The definition of a full $B$-queue implies
    that $z \in S_j$ for some $j \leq i$. Take the smallest such
    $j$. If $j=i$, then necessarily $z=y$, since
    $\sizeof{S_i \setminus (S_{i-1} \cup \{u_i\})} \leq 1$. Otherwise,
    $j < i$, and since $z\neq u_j$ (by choice of $i$) we know that
    $u_j \in N_B(z)$. But again, by our choice of $i$, the edge $u_jz$
    cannot be in $K$, and so $z \notin Z(K)$. It follows
    that \[ \sizeof{N_H(x) \cap Z(K) - y} = \sizeof{N_K(x) \cap Z(K) - y} = 0 \leq d_B(y) - d_K(y),\] as
    desired.

  \caze{2}{$K$ has no edges incident to $u_i$ for any $i$.} In this case, choose any $xy \in E(K)$. Since the $B$-queue is full, every vertex in $B$ is incident to at least one of $u_1, \ldots, u_q$, so $N(x) \cap Z(K) =\emptyset$. Thus again we have
  \[\sizeof{(N(x) \cap Z(K)) - y} = 0 \leq d_B(y) - d_K(y)\]
  as desired.
\end{proof}

\begin{remark} The converse of Theorem~\ref{thm:qcorefan} does not
  hold. In particular, there is an infinite family
  $\{H_p\}_{p \geq 4}$ of simple graphs such that for all $p$,
  $\corefan(H_p) = 0$ but $B=H_p$ does not have a full $B$-queue. To
  see this, let $B=H_p$ be the graph obtained from the complete graph
  $K_p$ by designating a special vertex $v$ and attaching $p-2$
  pendant edges to $v$. Let $z_1, \ldots, z_{p-2}$ be the vertices of
  degree $1$ adajcent to $v$. The graph $H_4$ is shown in
  Figure~\ref{fig:bqueueconverse}.
  \begin{figure}
    \centering
    \begin{tikzpicture}
      \apoint{} (w) at (0cm, 0cm) {};
      \rpoint{v} (u1) at (-90 : .67cm) {};
      \rpoint{} (u2) at (30 : 1cm) {};
      \rpoint{} (u3) at (150 : 1cm) {};
      \draw (u1) -- (u2) -- (u3) -- (u1);
      \foreach \i in {1,2,3} { \draw (w) -- (u\i); }
      \bpoint{z_1} (z1) at (-.5cm, -1.5cm) {};
      \bpoint{z_2} (z2) at (.5cm, -1.5cm) {};
      \draw (z2) -- (u1) -- (z1);
    \end{tikzpicture}
    \caption{The graph $H_4$, a simple graph with $\corefan(H_4) = 0$ that does not admit a full $B$-queue.}
    \label{fig:bqueueconverse}
  \end{figure}
  If $H_p$ has a full $B$-queue with vertex sequence
  $(u_1, u_2, \ldots)$ then at least one vertex of the $K_p$ must
  occur as a $u_i$; choose the smallest such $i$. If $u_i \in S_{i-1}$,
  then $u_i=v$ and $\sizeof{S_i \setminus S_{i-1}} \geq \sizeof{N(u_i)\setminus S_{i-1}}\geq 3$, a
  contradiction. If $u_i\not\in S_{i-1}$, then $\sizeof{S_i\setminus S_{i-1}} \geq 3$,
  again a contradiction. Hence $H_p$ does not admit a full $B$-queue.

Now we claim that $\corefan(H_p) \leq 0$. We apply Lemma~\ref{lem:tplus1}.
  Let $K$ be any subgraph of $H_p$ with $E(K)\neq \emptyset$. If $K$ does not contain any of the pendant edges incident to $v$, but $v$ does have at least one incident edge in $K$,
  then let $y=v$ and take any $x \in N_K(v)$. Since $|N_K(x)|\leq p-1$ and $d_{H_p}(y) - d_K(y) \geq p-2$, we get that
  \[ \sizeof{(N_{H_p}(x) \cap Z(K))  - y} \leq p-2 \leq d_B(y) - d_K(y), \]
  as desired. Otherwise we can choose $xy\in E(K)$ so that $(N_{H_p}(x) \cap Z(K)) - y=\emptyset$ and hence we immediately get our desired inequality; if we can take $x=z_i$ for some $i$ then this is the case, else there are no edges incident to $v$. Since $v$ is joined to every vertex in $B$, this yields $Z(K) = \emptyset$, so that $N_{H_p}(x) \cap Z(K) = \emptyset$
  no matter which $x$ we choose.
\end{remark}

We can now prove Theorem \ref{thm:Bqueuecorefan}.

\begin{proof}[Proof of Theorem \ref{thm:Bqueuecorefan}] By Lemma \ref{lem:lowmult}, it suffices to show
  that $\corefan(H_{>t})\leq t$, where $H_{>t}$ is the subgraph of $H$
  consisting of the edges with multiplicity greater than $t$. If $H$
  has multiplicity at most $t$, then $H_{>t}$ is edgeless, and hence
  $\corefan(H_{>t})=0$ by definition. So, we may assume that $H$ has
  maximum multiplicity exactly $t+1$, and that $H_{>t}$ is the subgraph $H_t$ of $H$
  consisting precisely of all the edges in $H$ of multiplicity
  $t+1$. By Lemma \ref{lem:flatten}, we get our desired result of
  $\corefan(H_t)\leq t$ provided $\corefan(B)\leq 0$. Since $B$ has a
  full $B$-queue, Theorem \ref{thm:qcorefan} indeed tells us that
  $\corefan(B)\leq 0$, thus completing our proof.
\end{proof}

\section{Proof of Theorem \ref{thm:converse}}\label{sec:converse}

Before we begin the main proof of Theorem~\ref{thm:converse}, it will
help to record a lemma about constructing regular graphs with perfect
matchings.
\begin{lemma}\label{lem:regmatching}
  Let $n$, $k$, and $r$ be positive integers with $r\leq n$ and $k<n$. If $k$ and $r$ are even,
  then there is a $k$-regular simple graph $G$ on $n$ vertices containing a
  vertex set $S_r$ of size $r$ such that the induced subgraph $G[S_r]$
  has a perfect matching.
\end{lemma}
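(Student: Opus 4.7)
The plan is to give $G$ by an explicit construction as a circulant graph, and then exhibit the matching on a set of consecutive vertices. Label the vertex set as $\{0,1,\dots,n-1\}$ and take $G = C_n(1,2,\dots,k/2)$, in which $i$ is adjacent to $i \pm 1, i \pm 2, \dots, i \pm k/2$ modulo $n$. Since $k$ is even, $k/2$ is a nonnegative integer, and since $k < n$ with $k$ even (so $k \leq n-1$, and in fact $k \leq n-2$ when $n$ is even), one checks that $k/2 < n/2$; consequently the $2(k/2) = k$ neighbours of each vertex are distinct modulo $n$, and no self-loops are created. So $G$ is a $k$-regular simple graph on $n$ vertices.

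For the induced matching, set $S_r = \{0,1,\dots,r-1\}$. Because $k \geq 2$ we have $k/2 \geq 1$, so any two consecutive vertices are adjacent in $G$; in particular the pairs
\[ \{0,1\},\ \{2,3\},\ \dots,\ \{r-2,r-1\} \]
are edges of $G[S_r]$. These $r/2$ pairs (an integer since $r$ is even) are vertex-disjoint and exhaust $S_r$, so they form a perfect matching of $G[S_r]$.

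The only subtlety to watch is ensuring that the circulant is genuinely $k$-regular, i.e.\ that the indices $\pm 1, \pm 2, \dots, \pm k/2$ give $k$ distinct residues modulo $n$ (the usual pitfall being the antipodal coincidence when $k/2 = n/2$). This is the main thing to verify carefully, and it is precisely where both parity hypotheses and the strict inequality $k < n$ are used; once this regularity is confirmed, the matching on $S_r$ comes for free from the fact that $k/2 \geq 1$.
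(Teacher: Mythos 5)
Your proof is correct and uses essentially the same approach as the paper: both constructions are circulant graphs, with the paper citing the standard circulant construction and the existence of a near-perfect matching, while you make that construction and the matching on consecutive vertex pairs explicit. The only substantive difference is that your argument is self-contained (no citation needed), and you fix $S_r$ as an explicit interval rather than abstractly selecting edges from a guaranteed matching.
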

\begin{proof}
  For any even $k$ and any $n>k$, the standard circulant graph
  construction (see, e.g., Chapter~12 of \cite{handbook-algo}) yields
  a $k$-regular simple graph on $n$ vertices with a matching $M$ that covers
  at least $n-1$ vertices. In particular, $M$ covers $n$ vertices if
  $n$ is even, and $n-1$ vertices if $n$ is odd. Since $r$ is even, we
  see that the number of vertices covered by $M$ is at least
  $r$. Thus, one may choose any set of $r/2$ edges in $M$, and take
  $S_r$ to be the set of vertices covered by those edges.
\end{proof}

\begin{proof}[Proof of Theorem \ref{thm:converse}] Our goal is to build a graph $G$ whose $t$-core is $H$ and with $\fan(G)>\Delta(G)+t$ (and hence $\Fan(G)>\Delta+t$). Since $\corefan(H)>t$ there exists $K\subseteq H$, $E(K)\neq\emptyset$ with $\cdeg_{H,K}(x,y)> t$ for all $xy \in E(K)$, that is, with
\[\sum_{z \in Z}(d_K(z) - d_H(z) + \mu_K(x,z) - t) > 1\]
for some $Z\subseteq N_K(x)$ with $y\in Z$. Note that we may choose $K$ so that it has no isolated vertices. Choose positive integers $D$ and $r$ satisfying all of the following conditions:
  \begin{enumerate}[(a)]
  \item $r\geq \Delta(H) + 6 + t$
  \item $r$ is even
  \item $D \geq 3r + t$
  \item $D \geq \Delta(H) + 2r^2$,
  \item $D+t$ is an even multiple of $r-1$, with this multiple being at least 4.
  \end{enumerate}
  Initialize $G=H$. Our construction proceeds in several
  stages; at each stage, we will add vertices and/or edges to $G$. When the construction is complete we will verify that $G$ indeed has $t$-core $H$ and $\fan(G)>\Delta(G)+t$.

  \stage{1} In this stage, we will add vertices and edges to our initial $G=H$ in
  order to guarantee that $d_G(x) = D$ for all $x \in V(K)$.

  Let $p = \sizeof{V(K)}$, and write $V(K) = \{x_1, \ldots,
  x_p\}$. Note that since $K$ is not edgeless, we know that $p\geq
  2$. For each $x_i \in V(K)$, let $d_i = D - \deg_H(x_i)$.  We can
  write each $d_i$ as $d_i = \alpha_i (r-1) + \beta_i$ where $\alpha_i$
  and $\beta_i$ are integers with $0 \leq \beta_i \leq r-2$. We
  rewrite this equation as 
  \[ d_i = (\alpha_i - \beta_i)(r-1) + \beta_i r. \] Let
  $a_{i,r-1} = \alpha_i - \beta_i$ and let $a_{i,r} = \beta_i$. We know that $a_{i,r}\in\{0, \ldots, r-1\}$; note also that by assumption (d),
  \begin{align*}
  a_{i, r-1}&=\alpha_i-\beta_i
 = \left(\tfrac{d_i-\beta_i}{r-1}\right)-\beta_i
  =\left(\tfrac{D-\deg_H(x_i)-\beta_i}{r-1}\right)-\beta_i\\
  &\geq \left(\tfrac{D-\Delta(H)-\beta_i}{r-1}\right)-\beta_i \geq \left(\tfrac{2r^2-\beta_i}{r-1}\right)-\beta_i \geq r
  \end{align*}
  If $\sum a_{i,r}$ is odd, then we redefine the first pair
  $(a_{1,r}, a_{1,r-1})$ to be $(a_{1,r} +(r-1), a_{1,r-1} -r)$, which
  will change the parity of the sum since $r$ is even by assumption
  (b). Given the above inequality, we still have that
  $a_{i, r}, a_{i, r-1} \geq 0$ for all $i$; in fact we know that
  $a_{i, r-1}\geq r$ except possibly when $i=1$. We also still have that
  \[d_i=a_{i, r}(r)+ a_{i, r-1}(r-1).\]

  For $\ell \in \{r-1, r\}$, let $s_{\ell} = \sum_{i=1}^p a_{i,\ell}$,
  and let $S_{\ell}$ be a set of $s_{\ell}$ new vertices added to
  $G$. Our definition of the numbers $a_{i, \ell}$ guarantees that
  $s_r$ is even, and this will be helpful for us in a later stage of
  our construction.

  For each $x_i \in V(K)$, choose a disjoint set $T$ of $a_{i,\ell}$
  vertices from $S_{\ell}$, and add an edge of multiplicity $\ell$
  from $x_i$ to each vertex of $T$. Once we complete this procedure
  for all $i$ and both $\ell$, we see that every vertex in $K$ has
  degree $D$. Let $S = S_{r-1} \cup S_r$ with $s=|S|$.

  \stage{2} In this stage, we will add edges within $S$ to ensure that
  every vertex in $S_{\ell}$ ends with degree $D-{\ell}+t$.

  Our strategy in this stage will be, roughly, to first paste in a
  regular multigraph on the vertex set $S$ to bring the vertices of
  $S_{r-1}$ up to degree $D-(r-1)+1$ and the vertices of $S_r$ up to
  degree $D-r+t+2$, and then remove parallel copies of a (carefully
  planted) perfect matching from $S_r$ so that those vertices end with
  degree $D-r+t$.

  Let $k = \frac{D - 2(r-1)+t}{r-1} = \frac{D+t}{r-1} - 2$. By
  assumption (e), $k$ is an even integer and $k\geq 2$.  Since $sk$ is
  even, we can construct a $k$-regular simple graph on the vertex set
  $S$ provided $s>k$.  To verify this, start by observing the
  following, where we are using $r \geq \Delta(H)$ (by a weak version
  of assumption (a)):
  \[ s = \sum_{i=1}^p (a_{i-1} + a_i) \geq \sum_{i=1}^p \frac{d_i}{r} \geq \sum_{i=1}^p \frac{D-r}{r} \geq 2\frac{D-r}{r} = \frac{2D}{r} - 2. \]
  To show $s>k$, it remains to prove that $2D/r > (D+t)/(r-1)$, which is true iff $D>t\left(\tfrac{r}{r-2}\right)$. This follows from $D > 2t$ (by assumption (c) coupled with a weak version of assumption (a)) and $r \geq 4$ (by an even weaker version of assumption (a)).

  Let $A$ be a $k$-regular simple graph on the vertex set $S$. Since
  $s_r$ is even, as established in the previous stage of construction,
  Lemma~\ref{lem:regmatching} allows us to choose $A$ so that it
  contains a perfect matching $M$ on the vertex set $S_r$.

  We now modify $G$ as follows: make $G[S]$ have underlying graph $A$
  with edges in $A-M$ having multiplicity $r-1$ and edges in $M$
  having multiplicity $r-3$.

  Observe that at this point, every vertex in $S_{r-1}$ has degree
  \[ (r-1) + (r-1)k = (r-1) + (D+t - 2(r-1)) = D - (r-1)+t, \]
  while every vertex in $S_r$ has degree
  \[ r + (r-1)k - 2 = r + (D+t - 2(r-1)) - 2 = D-r+t. \]

  \stage{3} In this last stage, we will bring each vertex $v \in V(H) - V(K)$
  up to degree $D$. We do this by simply adding a single pendant edge
  of multiplicity $r-1$ to $v$, followed by enough pendant edges of
  multiplicity $1$ to obtain the desired degree. Note that this is possible since, by assumption (d),
  $D \geq \Delta(H) +2r^2 \geq \Delta(H)+ r-1$, so the pendant edge of multiplicity
  $r-1$ cannot itself make the degree greater than $D$.

This completes our construction of $G$.

  \textbf{Verification of Properties.} We begin by verifying that $H$ is the $t$-core of $G$. To this end, note that  $d(v) = D$ for
  all $v \in V(H)$. For all $v \in S$, we have
  $d(v) \leq D-(r-1)+t$, which is less than $D$ since $r-1 > t$ by a weak version of assumption (a). The endpoints of the pendant edges from Stage~3
  have $d(v) \leq r-1 < D$ (using assumption (e) weakly). Hence $\Delta(G) = D$, with this degree achieved precisely by the vertices in $H$. Now, note that every vertex
  of $H$ is incident to an edge of multiplicity $r-1$ or greater. So, for any vertex $v\in V(H)$,
  \[d(v) + \mu(v) \geq D + (r-1) > D+t.\]
  For $v \in S_r$ we have
  \[d(v) + \mu(v) = (D-r+t) + r = D+t,\]
  while for $v \in S_{r-1}$ we
  have
  \[d(v) + \mu(v) = (D - (r-1)+t) + (r-1) = D+t.\]
  The endpoints
  of the pendant edges added in Stage~3 have
  \[d(v) + \mu(v) \leq (r-1)+(r-1) \leq D+t,\]
  where the last inequality is another weak application of (e). Hence $H$ is indeed the $t$-core of $G$.

  To verify that $\fan(G) > D+t$, we choose $J=G[K\cup S]$  and show that
  $\deg_{J}(x,y) > D+t$ for all $xy \in E(J)$.

  We know that $\mu(x,y) \leq r$ for all $xy \in E(J)$, with this coming from a weakened assumption (a) (and the fact that $\Delta(H)\geq \mu(H)$) when $xy\in E(K)$. Using the computations above, we know that
  $d_J(v) \geq D-r$ for all $x \in V(J)$, with this coming from $r\geq \Delta(H)$ (again by assumption (a)) when $v\in V(K)$. We thus get
  \[ d_J(x) + d_J(y) - \mu(x,y) \geq 2D - 3r > D+t, \]
  where the last inequality is by assumption (c). Thus, Condition~(i) of the definition
  of $d_J(x,y)$ fails for $k=D+t$.

  Next we show that Condition~(ii) fails for $k=D+t$ as well. Consider
  any $xy \in E(J)$. We consider two cases,
  according to the location of $x$.

  \caze{1}{$x \in V(K)$.}  If $y \in V(K)$, let $y' = y$. Otherwise,
  since $x$ is not isolated in $K$, we can take some $y' \in N_K(x)$. Since
  $\corefan(K) > t$, there is a set $Z' \subset N_K(x)$ with
  $y' \in Z'$ such that
  \[ \sum_{z \in Z'}[ d_K(z) - d_H(z) + \mu_K(x,z) - t ] > 1. \]
  Since $J$ includes all the edges of $K$, we have $Z' \subset N_J(x)$, with
  $\mu_K(x,z) = \mu_J(x,z)$ for all $z \in Z'$. Furthermore, $d_H(z) - d_K(z) = D - d_J(z)$
  for all $z \in Z'$ since $d_G(z) = D$ and the only $G$-edges incident to $z$ not
  included in $J$ are the edges in $E(H) - E(K)$. Thus,
  \begin{equation}
    \label{ieq:xcase1}
    \sum_{z \in Z'}[ d_J(z) + \mu_J(x,z) - (D + t)] = \sum_{z \in Z'}[d_K(z) - d_H(z) + \mu_K(x,z) - t] > 1.
  \end{equation}
  If $y \in Z'$ then this immediately implies that $\deg_J(x,y) > D + t$. Otherwise,
  by our choice of $y'$, we have $y \in S_{\ell}$ for some $\ell$, in which case
  \[ d_J(y) + \mu(x,y) - (D+t) = (D-\ell+t) + \ell - (D+t) = 0, \] so letting
  $Z = Z' \cup \{y\}$ does not change the sum in
  Inequality~\eqref{ieq:xcase1}, and the set $Z$ witnesses
  $\deg_J(x,y) > D+t$.

  \greg{\caze{2}{$x \in S$.}} Let $y'$ be the unique neighbor of $x$ in $V(K)$.
  Observe that
  \begin{equation}
    \label{ieq:yprime}
     d_J(y') + \mu(x,y') - (D + t) \geq (D - \Delta(H)) + (r-1) - (D + t) \geq r-1-\Delta(H) - t \geq 5,
  \end{equation}
  where the second last inequality comes from assumption (a).

  Observe that for any $z \in N_J(x)$, even if $z \in S$ we still have
  \begin{equation}
    \label{ieq:minus3}
    d_J(z) + \mu(x,z) - (D+t) \geq (D-r+t) + (r-3) - (D+t) = -3.
  \end{equation}
  If $y = y'$, then taking any $z \in N_J(x) - y$ and putting $Z = \{z,y\}$, we see that Inequalities
  \eqref{ieq:yprime} and \eqref{ieq:minus3} together imply that $Z$ witnesses $\deg_J(x,y) > D+t$.
  (Such a $z$ exists because, by our construction, $d_J(x) \geq (r-1)k \geq 2\mu(x)$.)

  If $y \neq y'$, then since $y \in N_J(x)$, taking $Z = \{y, y'\}$
  again yields a set witnesing $\deg_J(x,y) > D+t$, via the same
  inequalities.
\end{proof}

\bibliographystyle{amsplain}
\bibliography{bib-bqueue}

\providecommand{\bysame}{\leavevmode\hbox to3em{\hrulefill}\thinspace}
\providecommand{\MR}{\relax\ifhmode\unskip\space\fi MR }
% \MRhref is called by the amsart/book/proc definition of \MR.
\providecommand{\MRhref}[2]{%
  \href{http://www.ams.org/mathscinet-getitem?mr=#1}{#2}
}
\providecommand{\href}[2]{#2}
\begin{thebibliography}{10}

\bibitem{berge-fournier}
Claude Berge and Jean-Claude Fournier, \emph{A short proof for a generalization
  of {V}izing's theorem}, J. Graph Theory \textbf{15} (1991), no.~3, 333--336.
  \MR{1111994 (92d:05063)}

\bibitem{fournier77}
J.-C. Fournier, \emph{M\'ethode et th\'eor\`eme g\'en\'eral de coloration des
  ar\^etes d'un multigraphe}, J. Math. Pures Appl. (9) \textbf{56} (1977),
  no.~4, 437--453. \MR{0485499}

\bibitem{fournier73}
Jean-Claude Fournier, \emph{Colorations des ar\^etes d'un graphe}, Cahiers
  Centre \'Etudes Recherche Op\'er. \textbf{15} (1973), 311--314, Colloque sur
  la Th\'eorie des Graphes (Brussels, 1973). \MR{0349458}

\bibitem{hoffman}
D.~G. Hoffman, \emph{Cores of class {II} graphs}, J. Graph Theory \textbf{20}
  (1995), no.~3, 397--402. \MR{1355437}

\bibitem{hoffman-rodger}
D.~G. Hoffman and C.~A. Rodger, \emph{Class one graphs}, J. Combin. Theory Ser.
  B \textbf{44} (1988), no.~3, 372--376. \MR{941446}

\bibitem{ore-fourcolor}
Oystein Ore, \emph{The four-color problem}, Pure and Applied Mathematics, Vol.
  27, Academic Press, New York-London, 1967. \MR{0216979}

\bibitem{SS}
Diego Scheide and Michael Stiebitz, \emph{Vizing's coloring algorithm and the
  fan number}, J. Graph Theory \textbf{65} (2010), no.~2, 115--138.
  \MR{2724491}

\bibitem{SSFT}
Michael Stiebitz, Diego Scheide, Bjarne Toft, and Lene~M. Favrholdt,
  \emph{Graph edge coloring}, Wiley Series in Discrete Mathematics and
  Optimization, John Wiley \& Sons, Inc., Hoboken, NJ, 2012, Vizing's theorem
  and Goldberg's conjecture, With a preface by Stiebitz and Toft. \MR{2975974}

\bibitem{handbook-algo}
Krishnaiyan Thulasiraman, Subramanian Arumugam, Andreas Brandst\"adt, and Takao
  Nishizeki (eds.), \emph{Handbook of graph theory, combinatorial optimization,
  and algorithms}, Chapman \& Hall/CRC Computer and Information Science Series,
  CRC Press, Boca Raton, FL, 2016. \MR{3381609}

\bibitem{vizing}
V.~G. Vizing, \emph{On an estimate of the chromatic class of a {$p$}-graph},
  Diskret. Analiz No. \textbf{3} (1964), 25--30. \MR{0180505 (31 \#4740)}

\end{thebibliography}
\end{document}